      \theoremstyle{plain}
      \newtheorem{theorem}{Theorem}[section]
      \newtheorem{lemma}[theorem]{Lemma}
      \newtheorem{corollary}[theorem]{Corollary}
      \newtheorem{proposition}[theorem]{Proposition}
      \newtheorem{remark}[theorem]{Remark}
      \newtheorem{definition}[theorem]{Definition}        
\numberwithin{equation}{section}
      \def\@setcopyright{}
      \def\serieslogo@{}
\def\E{\mathcal{E}}
\def\U{\mathcal{U}}
\def\F{\tilde F}
\def\G{\mathcal{G}}
\def\M{\mathcal{M}}
\def\I{{I}}
\def\m{\EuScript{M}}
\def\W{\mathcal{W}}
\def\O{\mathcal O}
\def\c{\EuScript{C}}
\def\R{\mathbb R}
\def\rd{{\mathbb R ^d}}   
\def\Z{\mathbb Z}
\def\N{\mathbb N}
\def\T{\mathbb T}
\def\dist{\text{dist}}
\def\Id{\text{Id}}
\def\e{\epsilon}
\def\bv{\mathbf v}
\def\a{\alpha}
\def\b{\beta}
\def\QED{\hfill\hfill{\square}}
\begin{document}

\date{\today}
\author{Boris Kalinin$^\ast$ and Victoria Sadovskaya$^{\ast\ast}$}

\address{Department of Mathematics, The Pennsylvania State Ubiversity, University Park, PA 16802, USA.}
\email{kalinin@psu.edu, sadovskaya@psu.edu}

\title [Cocycles with one exponent over partially hyperbolic systems]
{Cocycles with one exponent over partially hyperbolic systems} 

\thanks{$^{\ast}$  Supported in part by NSF grant DMS-1101150}
\thanks{$^{\ast\ast}$ Supported in part by NSF grant DMS-0901842}


\begin{abstract}
We consider H\"older continuous linear cocycles over partially 
hyperbolic diffeomorphisms. For fiber bunched cocycles with 
one Lyapunov exponent we show continuity of measurable 
invariant conformal structures and sub-bundles. Further, we
establish a continuous version of Zimmer's Amenable Reduction
Theorem. For cocycles over hyperbolic systems we also obtain 
polynomial growth estimates for the norm and quasiconformal 
distortion from the periodic data.
\end{abstract}

\maketitle 

 
 \section{Introduction}
A linear cocycle over a dynamical system $f: \M \to \M$ is an 
automorphism $F$ of a vector bundle $\E$ over $\M$ that covers $f$. 
In the case  of a trivial vector bundle $\M \times \rd$, a linear cocycle 
can be represented by a matrix-valued function
$A: \M \to GL(d,\R)$ via $F(x,v) = (f(x), A(x)v)$. 
In smooth dynamics linear cocycles arise naturally from 
the derivative. They play an important role in the study of smooth systems and 
group actions, especially in aspects related to rigidity. 

In this paper we consider H\"older continuous linear cocycles with 
one Lyapunov exponent over 
hyperbolic and partially hyperbolic diffeomorphisms. An important 
motivation comes from the restriction of the derivative to H\"older 
continuous invariant sub-bundles such as center, stable, and 
unstable bundles. In hyperbolic case we studied such cocycles
in \cite{KS9,KS10}. We concentrated on obtaining conformality 
of the cocycle from its periodic data and applying this to local 
and global rigidity of Anosov systems \cite{KS9,GKS11}. 
From a different angle, such cocycles over hyperbolic and 
partially hyperbolic systems were considered in \cite{V,ASV}.
In particular, it was shown that cocycles with more than 
one Lyapunov exponent are generic in various cases, for example
in a neighborhood of a fiber bunched cocycle. These results 
indicated that having one exponent is an exceptional property. 
In this paper we show that it is true in a very strong sense
by developing a structural theory for such cocycles. We expect
that these results will be useful in the study of partially hyperbolic
systems and in the area of rigidity of hyperbolic systems and actions.

In the base we consider a partially hyperbolic diffeomorphism $f$ 
which is is volume-preserving, accessible, and center bunched.
This is the same setting as in the latest results on ergodicity of
partially hyperbolic diffeomorphisms \cite{BW}, except that we 
require accessibility instead of essential accessibility. We assume that the 
cocycle $F$ over $f$ is fiber bunched, i.e. non-conformality of $F$ 
in the fiber is dominated by the expansion/contraction along the 
stable/unstable foliations of $f$ in the base. This or similar conditions 
play a role in all results on noncommutative cocycles over
hyperbolic or partially hyperbolic systems. If $F$ is the restriction of 
the derivative of $f$ to the center sub-bundle, fiber bunching for $F$ 
corresponds to the strong center bunching for $f$. Thus our results 
apply to this setup.

For fiber bunched cocycles with one Lyapunov exponent with respect
to the volume, we prove continuity of measurable $F$-invariant  
sub-bundles and conformal structures. We use this to establish a 
continuous version 
of Zimmer's Amenable Reduction Theorem. Passing to a finite cover 
and a power of $F$, if necessary, we show  existence of a continuous 
flag of sub-bundles such that the induced cocycles on the factor bundles 
are conformal. For cocycles over hyperbolic systems we obtain stronger results including
H\"older regularity of the invariant structures.
In particular, for cocycles with one exponent at  each periodic orbit we
obtain the reduction for $F$ itself and polynomial growth estimates 
for its quasiconformal distortion. 

We formulate the results in Section \ref{statements} and 
give the proofs in Section \ref{proofs}.



\section{Definitions and notations} \label{preliminaries}

In this parer  $\M$ denotes a  compact connected smooth manifold.
 
 \subsection{Partially hyperbolic diffeomorphisms.} \label{partial}
 (See  \cite{BW} for more details.)
 
 A diffeomorphism $f$ of $ \M$ 
is said to be {\em partially hyperbolic} if
there exist a nontrivial $Df$-invariant splitting of the tangent bundle 
$T\M =E^s\oplus E^c \oplus E^u,$ and  a Riemannian 
metric on $\M$ for which one can choose continuous positive 
functions $\nu<1,\,$ $\hat\nu<1,\,$ $\gamma,$ $\hat\gamma\,$ such that 
for any $x \in \M$ and unit vectors  
$\,\bv^s\in E^s(x)$, $\,\bv^c\in E^c(x)$, and $\,\bv^u\in E^u(x)$
\begin{equation}\label{partial def}
\|Df(\bv^s)\| < \nu(x) <\gamma(x) <\|Df(\bv^c)\| < \hat\gamma(x)^{-1} <
\hat\nu(x)^{-1} <\|Df(\bv^u)\|.
\end{equation}
The sub-bundles $E^s$, $E^u$, and $E^c$ are called, respectively,
 stable, unstable, and center.
$E^s$ and $E^u$  are 
tangent to the stable and unstable foliations $W^s$ and $W^u$
 respectively.  An $su$-path in $\M$ is a concatenation 
 of finitely many subpaths which lie entirely in a single 
 leaf of $W^s$ or  $W^u$. A partially hyperbolic  
 diffeomorphism $f$  is called {\em accessible}  if any two points 
 in $\M$ can be connected by an $su$-path.

We say that $f$ is {\em volume-preserving} if it has an invariant probability 
measure $\mu$ in the measure class of a volume induced by a 
Riemannian metric (the density of $\mu$ is not required to be smooth). It is conjectured that any essentially accessible 
$f$ is ergodic with respect to such  $\mu$. This was established 
if $f$ is $C^2$ and center bunched \cite{BW}.
The diffeomorphism $f$ is called {\em center bunched}\,
if the functions  $\nu, \hat\nu, \gamma, \hat\gamma$  can be 
chosen to satisfy
\begin{equation}\label{center bunching}
\nu<\gamma \hat \gamma \quad \text{and}
\quad \hat\nu<\gamma \hat \gamma .
\end{equation}
This implies that 
$\|Df|_{E^c}\|\cdot\|(Df|_{E^c})^{-1}\|$,
which is a measure of  non-con\-for\-mality of $f$ on $E^c$, 
is dominated by the
contraction on $E^s$ and expansion on $E^u$.

If $f$ is $C^{1+\delta}$, the ergodicity holds under
{\em strong center bunching} assumption \cite{BW}:
 \begin{equation}\label{strong center bunching}
\nu^\theta<\gamma \hat \gamma \quad \text{and}
\quad \hat\nu^\theta<\gamma \hat \gamma,
\end{equation}
 where $\theta \in (0, \delta)$ satisfies
 \begin{equation}\label{theta}
\nu\gamma^{-1}< \kappa^\theta \quad \text{and}\quad
\hat\nu \hat\gamma^{-1}< \hat \kappa^\theta,
\end{equation}
 for some functions $\kappa$ and $\hat \kappa$ such that for all $x$ in $\M$
$$
\kappa(x)<\|Df(v)\|  \;\text{ if }\, v \in E^s(x)
\quad\text{and}\quad
\|Df(v)\| < \hat\kappa (x)^{-1}\text{ if }\, v \in E^u(x).
$$
 It is known  that the first inequality in \eqref{theta}
implies that $E^c\oplus E^s$ is $\theta$-H\"older, 
 the second one yields the same for $E^c\oplus E^s$,
and thus \eqref{theta} implies that $E^c$  is $\theta$-H\"older.

 \subsection{H\"older continuous vector bundles and linear cocycles}\label{bundle} $\;$
 
\noindent We consider  a finite dimensional  $\beta$-H\"older, $0<\beta \le 1$,
  vector bundle  $P : \E \to \M $. This means that there exists
an open cover $\{ U_i \}_{i=1}^k$ of $\M$ with coordinate systems 
$$
\phi_i : P^{-1} (U_i) \to U_i \times \rd, \quad \phi_i (v) = (P(v),\Phi_i(v))
$$
such that $\phi_j \circ \phi_i^{-1}$ is a homeomorphism and its
 restriction to the fiber $L_x=\Phi_j \circ \Phi_i^{-1}|_{\{x\} \times \R^d}$ depends $\beta$-H\"older on $x$, i.e. there is $C$  such that
  $\| L_x - L_y \| \le C \, \dist (x,y) ^\beta 
 $ for all $i,j$ and all $x,y \in U_i \cap U_j$. 
One can realize $\E$  as a $\beta$-H\"older sub-bundle of a trivial bundle
 by $\phi : \E \to \M  \times \R^{kd}$ with 
 $\phi (v) = (P(v), \rho_1 \Phi_1(v) \times ... \times \rho_k \Phi_k(v))$,
where $\{ \rho_i \}$ is a $\beta$-H\"older partition of unity for $\{ U_i \}$.

Using such an embedding we equip $\E$ with the induced $\beta$-H\"older  
Riemannian metric, i.e. a family of inner products on the fibers, and
fix an identification $\I_{xy} :\E_x \to \E_y$ of fibers at 
nearby points. We define the latter as 
$\Pi_y^{-1} \circ \Pi_x$, where $\Pi_x$ is the orthogonal projection in 
$\R^{kd}$ from $\E_x$ to the subspace which is the middle point of
the unique shortest geodesic between $\E_x$ and $\E_y$ in the 
Grassmannian of $d$-dimensional subspaces. 
The identifications $\{ \I_{xy} \}$ vary $\beta$-H\"older
on a neighborhood of the diagonal in $\M \times \M$ and  satisfy for
some constant $C$ and any unit vector $u \in \E_x$
\begin{equation}\label{I}
 \I_{xy}= \I_{yx}^{-1}, \;\; \| \I_{xy} u - u\| \le C \dist (x,y)^\beta ,
\; \text{and hence } | \| \I_{xy}\| -1| \le C \dist (x,y)^\beta.
\end{equation}

Let $f$ be a diffeomorphism of  $\M$
and $P : \E \to \M $ be a finite dimensional  $\beta$-H\"older 
vector bundle over $\M$.
A continuous linear cocycle over $f$ is a homeomorphism
$F:\E\to\E$ such that $P \circ F = f \circ P$ and $F_x : \E_x \to \E_{fx}$
is a linear isomorphism. Such an $F$ is called $\beta'$-H\"older, 
$0<\beta' \le \beta$, if $F_x$ depends $\beta'$-H\"older on $x$,
more precisely, if there exist $C$ such that 
for all nearby points $x,y\in \M$
\begin{equation}\label{FHolder}
 \| F_x - \I_{fxfy}^{-1} \circ F_y \circ \I_{xy}\|  \le C \dist (x,y)^{\beta'}.
\end{equation}


\subsection{Conformal structures}\label{conf structure}
(See \cite{KS10} for more details.)
A conformal structure on $\R^d$, $d\geq 2$, is a class of proportional 
inner products. The space $\c^d$ of conformal structures on $\R^d$
can be identified with the space of real symmetric positive definite $d\times d$ 
matrices with determinant 1, which is isomorphic to $SL(d,\R) /SO(d,\R)$. 
The group $GL(d,\R)$ acts transitively on $\c^d$ via $X[C] = (\det X^TX)^{-1/d}\; X^T C \, X, $ and $\c^d$ carries a $GL(d,\R)$-invariant Riemannian metric of non-positive curvature.  The distance 
to the identity in this metric is
\begin{equation}\label{dist(id,C)}
\dist (\Id , C) = \sqrt{d}/2 \cdot \left( (\log \lambda_1)^2 + 
\dots +(\log \lambda_d)^2 \right) ^{1/2},
\end{equation}
where $\lambda_1, \dots, \lambda_d$ are the eigenvalues of $C$
(see \cite[p.327]{T} for more details and \cite[p.27]{M} for the formula).



For a vector bundle  $\E \to \M$ we can consider a bundle $\c$ over $\M$ 
whose fiber $\c _x$ is the space of conformal structures on $\E_x$.
Using a background Riemannian metric on $\E$, the space $\c _x$ can 
be identified with the space of symmetric positive linear operators on $\E_x$ 
with determinant 1. We equip the fibers of $\c$ with the Riemannian 
metric as above.  A continuous (measurable) section of $\c$ is called 
a continuous  (measurable) conformal structure on $\E$. 

An invertible linear map $A :\E_x \to \E_y$ induces an isometry from 
$\c _x$ to $\c _y$ via $A(C) =(\det (A^*A))^{1/d}  (A^{-1})^*C (A^{-1})$, 
where $C$ is a conformal structure viewed as an operator.
If $F : \E \to \E$ is a linear cocycle over $f$, we say that a conformal 
structure $\tau$ on $\E$ is $F${\em -invariant}\, if $F(\tau(x)) = \tau(f(x))$
for all $x \in \M$.


 \section{Statements of  results} \label{statements}
 
\noindent{\bf Standing assumptions.} 
{\em Unless stated otherwise, in  this paper\\
$\M$ is a compact connected smooth manifold; \\
$f:\M\to\M$ is an accessible partially hyperbolic diffeomorphism 
that preserves a volume $\mu$ and is  
either $C^2$ and center bunched, or $C^{1+\delta}$ and strongly center bunched;\\
$\;P : \E \to \M $ is a finite dimensional  $\beta$-H\"older 
vector bundle over $\M$;  \\$F:\E\to\E$ is a $\beta$-H\"older
 linear cocycle over $f$.}
\vskip.4cm

First we establish continuity of measurable invariant conformal 
structures for fiber bunched cocycles. A cocycle $F$ over a partially 
hyperbolic diffeomorphism $f$ is called {\em fiber bunched}\, if for 
some $\beta$-H\"older norm on 
$\E$ 
\begin{equation}\label{fiber bunched}
\|F(x)\|\cdot \|F(x)^{-1}\| \cdot  \nu(x)^\beta <1 \quad\text{and}\quad
\|F(x)\|\cdot \|F(x)^{-1}\| \cdot  \hat\nu(x)^\beta <1
\end{equation}
for all $x$ in $\M$.
This condition  allows to establish convergence of
certain iterates of the cocycle along the  stable and unstable leaves. 

\begin{theorem} \label{structure} 
If $F$ is fiber bunched, then any $F$-invariant $\mu$-measurable 
conformal structure on $\E$  coincides $\mu$-a.e. with a continuous 
conformal structure. 

\end{theorem}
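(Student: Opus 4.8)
The plan is to exploit the fiber bunching condition to produce, along stable and unstable leaves, well-defined holonomy maps for the cocycle $F$, and then use the accessibility of $f$ together with ergodicity (which holds by the Burns--Wilkinson theorem under our standing assumptions) to spread the a.e.-invariance of the conformal structure $\tau$ into a genuine continuous invariance. First I would recall the standard consequence of \eqref{fiber bunched}: for $x,y$ on the same stable leaf of $f$, the limit
\[
H^s_{xy} \;=\; \lim_{n\to\infty}\, F_{f^n y}^{-1}\circ \I_{f^n x\, f^n y}\circ F^n_x
\]
exists, defines a linear isomorphism $\E_x\to\E_y$, depends continuously (indeed H\"older-continuously) on $x,y$ within a stable leaf, satisfies the cocycle relation $H^s_{yz}\circ H^s_{xy}=H^s_{xz}$ and the equivariance $H^s_{fx\,fy}\circ F_x = F_y\circ H^s_{xy}$, and coincides with $\I_{xy}$ up to an error $O(\dist(x,y)^\beta)$ when $x,y$ are close. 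The analogous statement holds along unstable leaves. These holonomies induce isometries on the conformal-structure bundle $\c$ (the action of an invertible linear map on $\c_x$ described in Section~\ref{conf structure} is by isometries), so $H^s$ and $H^u$ act on measurable and continuous sections of $\c$.

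Next I would show that the given measurable invariant conformal structure $\tau$ is, after modification on a null set, invariant under the stable and unstable holonomies. The argument is the now-classical one: since $\tau$ is $F$-invariant and $H^s$ is obtained as a limit of conjugates of iterates of $F$ followed by the near-identity maps $\I$, one checks that $H^s_{xy}(\tau(x))$ and $\tau(y)$ must agree; more carefully, one uses that on a full-measure set the forward orbit equidistributes, invokes Luzin's theorem to get a large compact set on which $\tau$ is (uniformly) continuous, and passes to the limit along a sequence of return times $f^{n_k}x, f^{n_k}y$ staying in that set, using that $\dist(f^{n_k}x, f^{n_k}y)\to 0$ along the stable leaf and that $\I$ is uniformly continuous near the diagonal. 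This yields $H^s$- and $H^u$-invariance of $\tau$ on a full-measure, $su$-saturated set. Then the accessibility of $f$ lets me propagate the value of $\tau$ from a single generic point to every point of $\M$ along $su$-paths: define $\tilde\tau(y)$ by transporting $\tau(x_0)$ along any $su$-path from a fixed generic $x_0$ to $y$; the holonomy-invariance on the full-measure set, together with a Fubini/absolute-continuity argument (stable and unstable holonomies are absolutely continuous, and the ergodic decomposition is trivial by ergodicity of $\mu$), shows this is independent of the chosen path and agrees with $\tau$ a.e. Continuity of $\tilde\tau$ follows because the holonomies vary continuously and $su$-paths can be chosen to vary continuously with their endpoints (local accessibility / the structure of the stable and unstable foliations), so $\tilde\tau$ is the desired continuous representative.

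The main obstacle, and the step requiring the most care, is the path-independence of the holonomy transport used to define $\tilde\tau$ globally. A priori the composition of stable and unstable holonomies around a closed $su$-loop need not be trivial; what rescues the argument is precisely that $\tau$ is $F$-invariant \emph{and} holonomy-invariant on a set of full measure, so the loop holonomy fixes $\tau(x)$ for a.e.\ $x$, and then a continuity/denseness argument extends this to all $x$. Making this rigorous requires combining the measurable invariance with the topological transitivity coming from accessibility, and handling the measure-theoretic bookkeeping (Luzin sets, absolute continuity of holonomies, the fact that $su$-saturation of a full-measure set is full-measure) carefully; this is where the partial-hyperbolicity hypotheses (center bunching or strong center bunching, ensuring $C^2$ or $C^{1+\delta}$ regularity enough for the Burns--Wilkinson ergodicity theorem and for absolute continuity of the foliations) are essential. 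The remaining steps — existence and regularity of the holonomies from \eqref{fiber bunched}, and the isometric action on $\c$ — are routine given the setup in Sections~\ref{bundle} and \ref{conf structure}.
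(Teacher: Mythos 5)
Your first half matches the paper's Proposition~\ref{tauHinvariant} almost exactly: existence of holonomies $H^s,H^u$ from fiber bunching, then Lusin's theorem to get a compact set $S$ on which $\tau$ is uniformly continuous, Birkhoff to get a full-measure set of points returning to $S$ with positive frequency, and a return-time limit to conclude $\tau(y)=H^s_{xy}(\tau(x))$ for a.e.\ $x,y$ on the same local stable leaf. That part is correct and is the paper's argument.

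The gap is in the second half, where you pass from essential $H^s$- and $H^u$-invariance to continuity on all of $\M$. The paper does not attempt this by hand; it delegates it to \cite[Theorem E]{ASV} or \cite[Theorem 4.2]{W}, which are non-trivial results resting on the julienne density point theory of Burns and Wilkinson \cite{BW}. Your sketch --- fix a generic $x_0$, transport $\tau(x_0)$ along $su$-paths, and argue path-independence via ``a Fubini/absolute-continuity argument'' --- does not go through in the partially hyperbolic setting. The issues are concrete. First, the full-measure set $G$ on which you have holonomy invariance is not $su$-saturated, and saturating it destroys the measure-theoretic control: an $su$-path from $x_0$ to an arbitrary $y$ passes through intermediate corner points which a priori need not lie in $G$, so the invariance you proved does not apply along the path. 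Second, absolute continuity of the stable and unstable foliations (Brin--Pesin, Pugh--Shub) is not strong enough by itself to run the needed Fubini argument here, because of the center direction: the problem of showing that a function that is a.e.\ constant along $W^s$ and along $W^u$ separately is a.e.\ constant on $\M$ is exactly the Hopf-argument obstruction that forced \cite{BW} to introduce juliennes and the center-bunching hypothesis. A naive Fubini over $W^s\times W^u$-plaques does not foliate a neighborhood of $y$ in the partially hyperbolic case. Third, your loop-holonomy argument (the loop holonomy fixes $\tau(x)$ for a.e.\ $x$, so extend by density/continuity) is circular: $\tau$ is only measurable at this stage, so there is no continuity to extend by, and the whole point of the theorem is to produce the continuous representative. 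In short, the hard content here is precisely what the cited theorems of Avila--Santamaria--Viana and Wilkinson supply, and your proposal replaces that machinery with a sketch that would only be valid in the Anosov case with genuine local product structure.
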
 

We denote the iterate 
$F_{f^{n-1}x} \circ ... \circ F_{fx} \circ F_x\,$ by $\,F^n_x$.
A cocycle $F$ is called  {\em uniformly quasiconformal}\,
 if  the quasiconformal distortion 
\begin{equation}\label{K_F}
 K_F(x,n)\overset{\text{def}}{=} \|F^n_x\| \cdot \| (F^n_x)^{-1}  \|
 \end{equation}
  is uniformly bounded 
 for all $x\in \M$ and $n\in\Z$. The cocycle is said to be
  {\em conformal} with respect to some
Riemannian metric on $\E$ if $K_F(x, n)=1$ for all $x$ and $n$.
 
 \begin{corollary} \label{qc implies conf}
If $F$ is uniformly quasiconformal then it preserves a 
continuous conformal structure on $\E$, equivalently,
$F$ is conformal with respect to a continuous 
Riemannian metric on $\E$.

\end{corollary}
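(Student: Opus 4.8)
The plan is to produce an $F$-invariant measurable conformal structure and then invoke Theorem \ref{structure}. First I would fix the background $\beta$-H\"older Riemannian metric on $\E$ and let $\tau_0$ denote the associated conformal structure (the section $x \mapsto \Id \in \c_x$). For each $n \in \Z$ consider the pulled-back structure $(F^n)^*\tau_0$, i.e. the conformal structure whose value at $x$ is $F^n_x(\tau_0(f^n x))$ under the induced isometry action on $\c_x$. Using \eqref{dist(id,C)} together with the hypothesis that $K_F(x,n) = \|F^n_x\|\cdot\|(F^n_x)^{-1}\|$ is uniformly bounded, one checks that $\dist\big(\tau_0(x),\, F^n_x(\tau_0(f^n x))\big)$ is bounded uniformly in $x$ and $n$: indeed the eigenvalues of the operator representing $F^n_x(\tau_0(f^n x))$ relative to $\tau_0(x)$ are pinched between $\|(F^n_x)^{-1}\|^{-2}$-type and $\|F^n_x\|^2$-type quantities whose ratio is controlled by $K_F(x,n)^2$, so all $\log\lambda_i$ are bounded.

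Thus the orbit $\{F^n_x(\tau_0(f^n x)) : n \in \Z\}$ lies in a ball of fixed radius $R$ in the fiber $\c_x$, which is a complete simply connected manifold of nonpositive curvature (a CAT(0) space). On such a space any bounded set has a unique circumcenter (the center of the smallest enclosing ball), which depends continuously — in fact $1$-Lipschitz — on the set in the Hausdorff metric and is natural under isometries. Define $\tau(x)$ to be the circumcenter in $\c_x$ of the closure of $\{F^n_x(\tau_0(f^n x))\}$. Measurability of $x \mapsto \tau(x)$ follows because $F$ is continuous (so each $x \mapsto F^n_x(\tau_0(f^n x))$ is continuous) and the circumcenter construction is Borel. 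Invariance is the key algebraic point: since $F_x$ is an isometry $\c_x \to \c_{fx}$ carrying the set $\{F^n_x(\tau_0(f^n x)) : n\}$ onto $\{F^{n}_{fx}(\tau_0(f^{n+1} x)) : n\}=\{F^{m}_{fx}(\tau_0(f^{m} (fx))): m\}$ (a reindexing $m=n-1$, using the cocycle identity $F^{n}_x = F^{n-1}_{fx}\circ F_x$), it carries circumcenter to circumcenter, so $F(\tau(x)) = \tau(fx)$.

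Now $\tau$ is an $F$-invariant $\mu$-measurable conformal structure, so by Theorem \ref{structure} it agrees $\mu$-a.e.\ with a continuous conformal structure $\tilde\tau$. Since $\mu$ has full support (it is in the class of a Riemannian volume) and $F$-invariance is a closed condition on the continuous section $\tilde\tau$, the structure $\tilde\tau$ is $F$-invariant everywhere. Finally, a continuous conformal structure is the same data as a continuous Riemannian metric on $\E$ up to fiberwise scaling; choosing the representative of $\tilde\tau(x)$ of determinant $1$ with respect to the background metric (equivalently, realizing $\tilde\tau$ by a genuine metric) gives a continuous Riemannian metric for which $F$ acts fiberwise as a similarity, i.e.\ $K_F(x,n) \equiv 1$, establishing the stated equivalence. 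The main obstacle, and the only step requiring care, is the first one: verifying that uniform boundedness of $K_F$ genuinely forces the $\c_x$-orbit to be bounded, which comes down to the elementary but slightly fiddly comparison between the distance formula \eqref{dist(id,C)} and the operator-norm quasiconformal distortion; everything after that is the standard circumcenter (bounded-orbit) argument in nonpositive curvature combined with Theorem \ref{structure}.
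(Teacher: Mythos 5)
Your proof is correct and follows essentially the same route as the paper: the paper invokes \cite[Proposition 2.4]{KS10} to produce a bounded measurable $F$-invariant conformal structure and then applies Theorem \ref{structure}, while you simply unwind that cited proposition and re-derive it via the standard circumcenter argument in the nonpositively curved fibers $\c_x$, which is precisely the mechanism behind the reference. (Only a small notational slip: since $F^n_x$ induces an isometry $\c_x\to\c_{f^n x}$, the pulled-back structure at $x$ should be $(F^n_x)^{-1}(\tau_0(f^n x))$ rather than $F^n_x(\tau_0(f^n x))$; the argument is unaffected.)
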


\vskip.1cm

Next we address continuity of measurable invariant sub-bundles.
If a cocycle has more than one Lyapunov exponent,
then the corresponding Lyapunov sub-bundles are 
invariant and measurable, but not continuous in general.
We show that for a fiber bunched cocycle with only one Lyapunov
exponent measurable invariant sub-bundles are continuous.
We denote by  $\lambda_+(F,\mu)$
and $\lambda_-(F,\mu)$ the largest and smallest Lyapunov 
exponents of $F$ with respect to $\mu$. We recall that
for  $\mu$  almost every $x\in \M$,  
\begin{equation} \label{exponents}
\lambda_+(F,\mu)= \lim_{n \to \infty} \frac 1n \log \| F_x ^n \| 
\quad \text{and}\quad
\lambda_-(F,\mu)=  \lim_{n \to \infty} \frac 1n \log \| (F_x ^n)^{-1} \|^{-1} 
\end{equation}
 (see \cite[Section 2.3]{BP} for more details).

\begin{theorem} \label{distribution} 
Suppose that $F$ is fiber bunched and $\lambda_+(F,\mu)=\lambda_-(F,\mu)$.
Then any $\mu$-measurable $F$-invariant sub-bundle of $\E$  coincides $\mu$-a.e. with a continuous one.
\end{theorem}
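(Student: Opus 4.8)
The plan is to follow the same strategy as in the proof of Theorem~\ref{structure}: show that the given measurable $F$-invariant sub-bundle $V$ is automatically invariant under the stable and unstable holonomies of $F$, and then use accessibility of $f$ to deduce that $V$ coincides $\mu$-a.e.\ with a continuous sub-bundle. First I would recall that, being fiber bunched, $F$ admits continuous $F$-equivariant stable and unstable holonomies: families of isomorphisms $H^s_{xy}:\E_x\to\E_y$ for $y\in W^s(x)$ and $H^u_{xy}:\E_x\to\E_y$ for $y\in W^u(x)$, arising as geometrically convergent limits $H^s_{xy}=\lim_{n\to\infty}(F^n_y)^{-1}\circ\I_{f^nx\,f^ny}\circ F^n_x$ (and symmetrically along $W^u$) and satisfying $H^s_{fx\,fy}=F_y\circ H^s_{xy}\circ F^{-1}_x$. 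These act on the bundle $\G\to\M$ whose fiber over $x$ is the Grassmannian of $\ell$-planes in $\E_x$, with $\ell=\dim V$; only fiber bunching of $F$ itself is used here, so one may pass to $\G$ freely (equivalently, replace $V$ by the line sub-bundle $\Lambda^\ell V$ of $\Lambda^\ell\E$) without any concern about fiber bunching of induced cocycles. A continuous $F$-invariant sub-bundle is automatically holonomy-invariant (a routine consequence of fiber bunching and the convergence above), so the content of the theorem is to reach the same conclusion for a merely measurable $V$ under the extra hypothesis $\lambda_+(F,\mu)=\lambda_-(F,\mu)$.

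The crux of the proof --- and the step I expect to be the main obstacle --- is to establish holonomy-invariance of a measurable $F$-invariant $V$, and this is precisely where $\lambda_+(F,\mu)=\lambda_-(F,\mu)$ enters essentially. I would view the measurable section $x\mapsto V_x$ as an $F$-invariant probability measure $m=\int\delta_{V_x}\,d\mu(x)$ on $\G$ projecting to $\mu$, and run an invariance-principle argument of Ledrappier type, as in the proof of Theorem~\ref{structure}: since all Lyapunov exponents of $F$ with respect to $\mu$ are equal, the Lyapunov exponents of the cocycle induced by $F$ on the fibers of $\G$ along $m$ all vanish (they are differences of Lyapunov exponents of $F$), so $m$ displays neither fiberwise expansion nor contraction. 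The invariance principle then forces the conditional measures of $m$ --- which are the Dirac masses $\delta_{V_x}$ --- to be invariant under $H^s$ and $H^u$, that is, $H^s_{xy}V_x=V_y$ for $\mu$-a.e.\ $x$ and a.e.\ $y\in W^s(x)$, and similarly along $W^u$. The delicate point, in contrast with the continuous case, is that $V_{f^nx}$ and $V_{f^ny}$ need not be close even though $\dist(f^nx,f^ny)\to0$; controlling the relevant limit requires combining Lusin regularity of $V$, Poincar\'e recurrence for the orbits of $x$ and of $y$, and the subexponential bound $\tfrac1n\log K_F(x,n)\to0$ (valid $\mu$-a.e.\ precisely because $F$ has a single Lyapunov exponent), handled in the careful, non-soft manner characteristic of invariance-principle arguments.

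Finally, with $V$ measurable, $f$-invariant, and invariant under $H^s$ and $H^u$, I would conclude by the accessibility argument used for Theorem~\ref{structure}. Choosing a compact set of measure close to $1$ on which $V$ is uniformly continuous and a density point $p$ of it, every $x\in\M$ is joined to $p$ by an $su$-path, and composing the corresponding continuous holonomies along the path transports $V_p$ to a value at $x$ that varies continuously with $x$ --- for $su$-paths of bounded length depending continuously on the endpoint --- thereby producing a continuous sub-bundle of $\E$. This sub-bundle is $F$-invariant by density and continuity, and it agrees with $V$ $\mu$-a.e.\ by the holonomy-invariance established above together with ergodicity of $f$ with respect to $\mu$ (which holds in the present setting, by \cite{BW}). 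Apart from the invariance-principle step, every ingredient here is exactly as in the proof of Theorem~\ref{structure}.
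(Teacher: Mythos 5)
Your proposal is correct and takes essentially the same route as the paper: pass to the Grassmannian bundle $\G$, lift $\mu$ to the measure $m$ on $\G$ with atomic conditionals $\delta_{V_x}$, use the one-exponent hypothesis (via $\tfrac1n\log K_F(x,n)\to 0$ a.e.\ and the Lipschitz estimate for the induced Grassmannian cocycle) to see that the fiberwise exponents vanish $m$-a.e., and then invoke the Avila--Santamaria--Viana invariance principle to get a continuous, holonomy-invariant disintegration that agrees with $\delta_{V_x}$ a.e., from which the continuous sub-bundle is extracted by density. The only difference is cosmetic: you unpack the invariance principle into a separate "holonomy-invariance" step plus an accessibility argument, whereas the paper cites \cite[Theorem~C]{ASV} as a single black box that delivers both holonomy-invariance and continuity of the conditional measures at once.
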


Using Theorems \ref{structure} and \ref{distribution} together
with Zimmer's Amenable Reduction Theorem we obtain the
following description of fiber bunched cocycles with one exponent.

For any finite cover $p: \tilde \M \to \M$ the pullback of $\E$ defines  
a $\beta$-H\"older vector bundle $\tilde \E$ over $\tilde \M$.
If $\tilde f : \tilde \M \to \tilde \M$ is a diffeomorphism covering $f$,
then $F$ lifts uniquely to  a $\beta$-H\"older linear cocycle 
$\tilde F : \tilde  \E \to \tilde  \E$ over $\tilde f$ that covers $F$.
We call such a cocycle $\tilde F$ a finite cover of $F$.

\begin{theorem}[Continuous  Amenable Reduction] \label{reduction} 
Suppose that $F$ is fiber bunched and
 $\lambda_+(F,\mu)=\lambda_-(F,\mu)$. Then there exists a finite
 cover $\tilde F : \tilde  \E \to \tilde  \E$ of $F$
and $N\in \N$ such that $\tilde F^N$ satisfies the following property.
There exist a flag of continuous $\tilde F^N$-invariant sub-bundles 
\begin{equation} \label{flag}
\{ 0 \}=\tilde \E^0 \subset \tilde \E^1 \subset ... \subset  \tilde \E^{k-1} \subset \tilde \E^k =\tilde \E
\end{equation} 
and continuous conformal structures
on the  factor bundles $ \,\tilde \E^{i}/ \tilde \E^{i-1}$,
$\,i=1, ... , k,$ invariant under  the factor-cocycles 
induced by $\tilde F^N$.
 \end{theorem}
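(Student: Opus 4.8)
The plan is to combine the two continuity theorems with the classical Amenable Reduction Theorem of Zimmer, applied to the measurable setting, and then to upgrade the resulting measurable flag and measurable conformal structures to continuous ones using Theorems \ref{structure} and \ref{distribution}. First I would invoke Zimmer's Amenable Reduction Theorem for the measurable cocycle $F$ over the ergodic system $(f,\mu)$: since the algebraic hull of a cocycle with one Lyapunov exponent (with respect to $\mu$) is a compact extension of an amenable group, passing to a finite cover $\tilde\M\to\M$ and a power $\tilde F^N$ if necessary, one obtains a measurable $\tilde F^N$-invariant flag of sub-bundles
$$
\{0\}=\tilde\E^0\subset\tilde\E^1\subset\cdots\subset\tilde\E^k=\tilde\E
$$
together with measurable conformal structures on each factor bundle $\tilde\E^i/\tilde\E^{i-1}$ invariant under the induced factor-cocycle. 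Here the finite cover and the power are exactly what is needed to reduce the algebraic hull to a block-triangular form whose diagonal blocks act conformally; the hypothesis $\lambda_+(F,\mu)=\lambda_-(F,\mu)$ is what forces the diagonal blocks to be conformal rather than merely irreducible with a nontrivial noncompact part.

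Next I would check that the lifted cocycle $\tilde F$ — hence also $\tilde F^N$ — remains fiber bunched. Since $p:\tilde\M\to\M$ is a finite cover, it is a local isometry for the pulled-back metric, so the functions $\nu,\hat\nu$ and the norms $\|F\|,\|F^{-1}\|$ pull back unchanged and \eqref{fiber bunched} persists; the same holds for a power, because fiber bunching is stable under taking powers (the product of the distortions is controlled by the product of the corresponding $\nu^\beta$-contractions, which still satisfies the inequality). Likewise $\tilde f$ is accessible, partially hyperbolic, volume-preserving (for the lifted volume $\tilde\mu$), and center bunched or strongly center bunched, so the standing assumptions hold for $\tilde F^N$ over $\tilde f$. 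Moreover, one exponent for $F$ with respect to $\mu$ gives one exponent for $\tilde F^N$ with respect to $\tilde\mu$.

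Now I would apply the continuity results. Each $\tilde\E^i$ is a measurable $\tilde F^N$-invariant sub-bundle, so by Theorem \ref{distribution} it agrees $\tilde\mu$-a.e. with a continuous sub-bundle; since these continuous representatives are $\tilde F^N$-invariant on a full measure set and $\tilde F^N$ is continuous, invariance extends everywhere, and the inclusions $\tilde\E^{i-1}\subset\tilde\E^i$ persist by continuity. For the factor bundles, the induced cocycle on $\tilde\E^i/\tilde\E^{i-1}$ is again $\beta$-H\"older, and I would verify it inherits fiber bunching: the quotient cocycle's norm and inverse-norm are bounded by those of $\tilde F^N$ restricted to an invariant complement, so the product $\|\cdot\|\cdot\|\cdot^{-1}\|$ satisfies \eqref{fiber bunched} with the same $\nu,\hat\nu$. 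The factor cocycle also has one Lyapunov exponent with respect to $\tilde\mu$, since its exponents lie between $\lambda_-$ and $\lambda_+$ of $\tilde F^N$, which coincide. Hence Theorem \ref{structure} applies to each factor cocycle, and the measurable invariant conformal structure on $\tilde\E^i/\tilde\E^{i-1}$ coincides a.e. with a continuous one; continuity of the cocycle again promotes a.e. invariance to genuine invariance.

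The main obstacle I anticipate is the bookkeeping around the quotient cocycles: one must choose continuous (not merely measurable) complements to the continuous sub-bundles $\tilde\E^{i-1}$ inside $\tilde\E^i$ in order to realize the factor cocycle as a genuine H\"older linear cocycle to which Theorems \ref{structure} and \ref{distribution} literally apply, and then check that fiber bunching and the one-exponent condition are preserved under this operation. This is where some care is needed — a continuous complement always exists (e.g. the orthogonal complement in the background metric), but it is generically not invariant, so the quotient cocycle is defined only up to the identification with $\tilde\E^i/\tilde\E^{i-1}$; one verifies that this does not affect the estimates \eqref{fiber bunched} because passing to the quotient can only decrease the relevant operator norms. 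Once this is in place, the theorem follows by assembling the continuous flag and the continuous invariant conformal structures produced above. \pf
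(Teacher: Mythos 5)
The central gap is in your opening step: you assert that Zimmer's Amenable Reduction, combined with ``passing to a finite cover and a power if necessary,'' already yields a \emph{measurable} $\tilde F^N$-invariant flag on the cover. This is not where the cover and the power come from, and the claim is not substantiated. The Amenable Reduction Theorem is a purely measurable statement: it produces a measurable coordinate change $C$ so that $A(x)=C^{-1}(fx)F(x)C(x)$ lands in an amenable subgroup $G$ of $GL(d,\R)$. In general $G$ is only \emph{virtually} contained in a standard block-triangular group (there is a finite-index subgroup $G_0\subset G$ contained in a standard one), and the full group $G$ need not preserve any single flag. What $G$ does preserve is a finite \emph{union} of flags, equivalently for each dimension a finite union $U^{(i)}$ of subspaces. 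Applying Amenable Reduction to the lifted cocycle $\tilde F$ over a finite cover would run into exactly the same issue; a topological cover of $\M$ does nothing in the measurable category, so this step cannot produce the flag you need.

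What actually forces the cover and the power is a \emph{topological} obstruction that only appears after continuity is established. The paper first shows (by the same conditional-measure/holonomy-invariance argument as in Theorem~\ref{distribution}) that the measurable union $\hat\U^{(i)}_x=C(x)U^{(i)}$ coincides a.e.\ with a continuous field $\U^{(i)}_x$ of finitely many subspaces. This continuous union of subspaces is globally defined on $\M$, but it need not split into individual continuous sub-bundles: transporting a local labelling of the $l_i$ subspaces around a loop produces a monodromy homomorphism $\rho:\pi_1(\M,q)\to\Sigma(l_1)\times\cdots\times\Sigma(l_{k-1})$. Passing to the finite cover corresponding to $\ker\rho$ trivializes the monodromy, and one must then verify separately that $f$ lifts to this cover (using $f_*(\ker\rho_q)=\ker\rho_{f(q)}$) and that the lift remains accessible (via \cite[Lemma~4.4]{W}); neither is automatic. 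Finally, the lifted cocycle $\tilde F$ permutes the now-globally-defined sub-bundles, and the power $N$ is chosen to kill this permutation. Your proposal has the order of operations backwards — continuity must come \emph{before} the cover, because the cover is chosen to trivialize a monodromy class of a continuous object.

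A secondary gap concerns the conformal structures on the factor bundles in the non-standard case. A block $A_i$ of an element of $G_0$ is conformal, but $G$ itself is larger and its elements can mix the blocks; the existence of a conformal structure on $V^{i}/V^{i-1}$ invariant under the full stabilizer $G_*\subset G$ requires an averaging argument (the paper takes the center of the smallest ball containing the finite orbit of the standard structure, using nonpositive curvature of the space of conformal structures). You also slightly misattribute the role of the one-exponent hypothesis: conformality of the diagonal blocks is built into the structure of the standard maximal amenable subgroups of $GL(d,\R)$, not forced by $\lambda_+=\lambda_-$; the one-exponent condition is what makes Theorem~\ref{distribution} applicable to upgrade the measurable sub-bundles to continuous ones. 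Your remaining steps (checking the lift and its power stay fiber bunched and satisfy the standing assumptions, inducing holonomies on quotients, upgrading factor conformal structures via Theorem~\ref{structure}) are sound and do match the paper once the flag has been correctly constructed.
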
 

The proof shows that when it is necessary to pass to a cover, the resulting cocycle $\tilde F^N$ preserves more than one flag as in \eqref{flag}. Their union is preserved by $\tilde F$ and is the lift of an invariant object for $F$. 
To illustrate this, in Section \ref{example}  we construct
a cocycle $F$ on $\E=\T^2 \times \R^2$ with no invariant $\mu$-measurable sub-bundles or conformal structures. Its lift $\tilde F$ to a double cover preserves two continuous line bundles, while $F$ preserves a continuous 
field of pairs of lines.

In the case when $ \tilde \E_1=\tilde \E$, the cocycle $F$ itself is conformal 
on $\E$ with respect to some continuous Riemannian metric. This can
be easily seen from the proof or deduced from the theorem using 
Corollary~\ref{qc implies conf}.

If there are $d=\dim \E_x$ continuous vector fields which give bases 
for all $ \tilde \E^i$, then the theorem implies that $\tilde F^N$ is continuously
cohomologous to a cocycle with values in a ``standard" maximal amenable
subgroup of $GL(d,\R)$, see Remark \ref{triv}. However, triviality 
of $\tilde \E$ alone is insufficient for such reduction even if  $\tilde \E=\T^2 \times \R^2$ since invariant sub-bundles may be non-orientable, see 
\cite[Section 8.1]{S11} or the example in Section \ref{example}.


 \begin{remark} If $f$ is $C^{1+\delta}$ and satisfies the strong 
 center bunching condition, then the above results apply 
to $F=Df|_{E^c}$.
Indeed, the condition \eqref{theta} implies that $E^c$  is $\theta$-H\"older
and \eqref{strong center bunching} yields 
fiber bunching  \eqref{fiber bunched} for  $F=Df|_{E^c}$.
 \end{remark}

We also show that  fiber bunching  can be replaced by the following 
assumption (existence of the invariant volume $\mu$ is still assumed).

\begin{corollary} \label{reductioncor}
Suppose that $\lambda_+(F,\eta)=\lambda_-(F,\eta)$
for every ergodic $f$-invariant measure $\eta$. Then the conclusions
of Theorems \ref{structure},  \ref{distribution}, and  \ref{reduction} hold.  
\end{corollary}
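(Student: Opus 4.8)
The plan is to show that the hypothesis forces $F$ to be fiber bunched with respect to a $\beta$-H\"older norm on $\E$ equivalent to the original one; then Theorems \ref{structure}, \ref{distribution}, and \ref{reduction} apply directly, their exponent hypotheses $\lambda_+(F,\mu)=\lambda_-(F,\mu)$ being automatic here by integrating $\lambda_\pm(F,\cdot)$ over the ergodic decomposition of $\mu$. The crucial step is a uniform subexponential bound on the quasiconformal distortion: for every $\e>0$ there is $C_\e$ such that $K_F(x,n)\le C_\e\,e^{\e|n|}$ for all $x\in\M$ and $n\in\Z$.

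To prove this bound, put $c_n(x)=\log K_F(x,n)=\log\|F^n_x\|+\log\|(F^n_x)^{-1}\|$. Each $c_n$ is continuous (as $F$ is continuous and $\M$ compact), and $K_F(x,n+m)\le K_F(f^nx,m)\,K_F(x,n)$ gives $c_{n+m}(x)\le c_n(x)+c_m(f^nx)$, so $(c_n)_{n\ge1}$ is a continuous subadditive sequence over $f$. For an ergodic $f$-invariant measure $\eta$, Kingman's theorem together with \eqref{exponents} yields $\tfrac1n c_n\to\lambda_+(F,\eta)-\lambda_-(F,\eta)=0$ for $\eta$-a.e.\ $x$, so $\lim_n\tfrac1n\int c_n\,d\eta=0$. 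By the uniform version of the subadditive ergodic theorem, $\lim_n\tfrac1n\sup_\M c_n$ equals $\sup_\eta\big(\lambda_+(F,\eta)-\lambda_-(F,\eta)\big)$ over ergodic $f$-invariant $\eta$, which is $0$ by hypothesis. Combined with the crude bound $c_n\le n\sup_\M c_1$ for the finitely many small $n$, this gives $c_n(x)\le\e n+C_\e$ for all $n\ge0$ and $x\in\M$; since $K_F(x,-m)=K_F(f^{-m}x,m)$, the same bound holds for negative $n$, and the claimed estimate follows.

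On the compact manifold $\M$ the continuous functions $\nu,\hat\nu$ of \eqref{partial def} satisfy $\nu_0:=\max_\M\nu<1$ and $\hat\nu_0:=\max_\M\hat\nu<1$. Choosing $\e$ so small that $e^\e\max(\nu_0,\hat\nu_0)^\beta<1$, the estimate above yields $\sum_{n\ge1}\big(\sup_{x\in\M}K_F(x,n)\big)\,\nu_0^{\beta n}<\infty$ and the analogue with $\hat\nu_0$. This uniform summability is precisely what is needed in Section \ref{proofs} to construct the stable and unstable holonomies of $F$ along $W^s$ and $W^u$ and to run the proofs of Theorems \ref{structure}, \ref{distribution}, and \ref{reduction}; equivalently, a standard adapted-norm argument converts it into the one-step inequalities \eqref{fiber bunched} for a $\beta$-H\"older norm on $\E$ equivalent to the given one. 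Either way the three theorems apply to $F$, which proves the corollary.

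The step I expect to be the main obstacle is the one where almost-everywhere convergence of $\tfrac1n c_n$ under each invariant measure is upgraded to the uniform statement $\tfrac1n\sup_\M c_n\to0$ — the uniform subadditive ergodic theorem — since this is exactly where the hypothesis on \emph{all} ergodic measures is used rather than just on $\mu$. The remaining pieces (subadditivity of $(c_n)$, identifying the limits via \eqref{exponents}, the bounds on $\nu$ and $\hat\nu$, and deducing fiber bunching from uniform summability) are routine.
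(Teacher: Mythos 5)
Your approach matches the paper's in its main structure: reduce to a uniform subexponential bound $K_F(x,n)\le C_\e e^{\e|n|}$ on the quasiconformal distortion, obtained from subadditivity of $\log K_F(x,n)$ together with the hypothesis that the exponent gap $\lambda_+-\lambda_-$ vanishes for \emph{every} ergodic invariant measure. Where you cite ``the uniform version of the subadditive ergodic theorem,'' the paper formulates and proves this from scratch as Proposition~\ref{<0} and packages the distortion bound as Proposition~\ref{K(x,n)}; that portion of your argument is sound.

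The gap is in the final step, and you have misidentified where the difficulty lies: you flag the uniform subadditive ergodic theorem as the obstacle and call ``deducing fiber bunching from uniform summability'' routine, whereas in fact the former is the known ingredient and the latter is where the argument breaks. Neither of the two ways you propose to close the loop works as stated. The summability $\sum_n (\sup_x K_F(x,n))\nu_0^{\beta n}<\infty$ is \emph{not} what the holonomy construction of Proposition~\ref{close to Id} uses: the crucial estimate (Lemma~\ref{F_i}) controls $\|(F^i_y)^{-1}\|\cdot\|F^i_x\|$ by splitting it as $\prod_{k<i}K_F(y_k,1)\cdot\prod_{k<i}\frac{\|F_{x_k}\|}{\|F_{y_k}\|}$, and the first factor is a product of \emph{one-step} distortions. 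This product is only controlled by the trivial bound $(\max_x K_F(x,1))^i$, which can grow much faster than $K_F(y,i)$; subadditivity gives $K_F(y,i)\le\prod K_F(y_k,1)$ but not the reverse. So the subexponential bound on the $n$-step distortion does not bound the product of one-step distortions, and the series in \eqref{2.1}--\eqref{2.3} need not converge. As for the alternative ``standard adapted-norm argument'' converting the subexponential bound into the one-step fiber bunching inequality \eqref{fiber bunched}: there is no such standard construction. Adapted (Lyapunov) norms tame growth rates of $\|F^n\|$, but controlling the one-step \emph{ratio} $\|F_x\|'\|F_x^{-1}\|'$ uniformly close to $1$ is a different matter; even when $K_F(x,n)$ is uniformly bounded this is the content of the nontrivial Corollary~\ref{qc implies conf}, not a norm-change triviality, and when $K_F$ merely grows subexponentially the Sullivan-type center-of-mass construction behind Proposition~\ref{bounded measurable} does not converge.

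The paper's actual route around this is short but essential: choose $\e$ with $e^\e\nu_0^\beta<1$; then for $N$ large the cocycle $F^N$ over $f^N$ \emph{is} fiber bunched (its one-step distortion is $K_F(x,N)\le C_\e e^{\e N}$, and $C_\e e^{\e N}\nu_N(x)^\beta\le C_\e(e^\e\nu_0^\beta)^N<1$), so Proposition~\ref{close to Id} gives holonomies for $F^N$. By the uniqueness clause (c) of Proposition~\ref{close to Id}, the holonomies for $F^N$ and $F^{N+1}$ agree (both are holonomies for $F^{N(N+1)}$), and a short computation then shows they satisfy Definition~\ref{holonomies}(iii) for $F$ itself. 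This is the step you need to add; once the holonomies for $F$ are in hand, Theorems~\ref{structure}, \ref{distribution}, and \ref{reduction} apply exactly as you say.
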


This corollary relies on a certain estimate for subadditive 
sequences of continuous functions. In Proposition \ref{<0} we 
give a definitive version of this useful result.

\vskip.1cm

We obtain H\"older continuity of the invariant structures under 
a stronger accessibility
assumption. The diffeomorphism $f$ is said to be 
{\em locally $\alpha$-H\"older accessible}\,
if there exists a number $L=L(f)$ such that for all sufficiently close $x,y\in \M$ there is an $su$-path
$\{x=x_0, \;x_1, \;\dots ,\; x_L=y\}\,$ such that 
$$
  \dist_{W^i}  (x_{i-1}, x_i)\le C\,\dist (x,y)^\alpha \quad\text{for }i=1, \dots, L.
$$
Here the distance between $x_{i-1}$ and $x_i$ is measured  along the 
corresponding stable or unstable leaf $W^i$.

\begin{corollary} \label{locally accessible}
If $\,f$ is locally $\,\alpha$-H\"older accessible then the invariant conformal structures
and sub-bundles in Theorems \ref{structure}, \ref{distribution}, \ref{reduction}
and Corollaries \ref{qc implies conf}, \ref{reductioncor} are 
$\alpha \beta$-H\"older.

\end{corollary}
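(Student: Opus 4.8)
The plan is to reduce the statement, in each of the listed cases, to a single assertion about holonomy-invariant continuous sections, and then to upgrade continuity to $\alpha\beta$-H\"older regularity by concatenating the leafwise H\"older estimate for holonomies along the $su$-paths furnished by local $\alpha$-H\"older accessibility. First I would recall that in each cited result the invariant object, once known to be continuous, is invariant under the stable and unstable holonomies $H^s,H^u$ of $F$: this is precisely what the proofs of Theorems \ref{structure} and \ref{distribution} produce (a measurable invariant structure agrees $\mu$-a.e.\ with a holonomy-invariant continuous one), the flag \eqref{flag} and the conformal structures on the factor bundles in Theorem \ref{reduction} are assembled on a finite cover from such holonomy-invariant pieces, and under the hypotheses of Corollary \ref{reductioncor} the one-exponent assumption makes $K_F(x,n)$ subexponential (via Proposition \ref{<0}), which again yields $H^s,H^u$ and makes the structures produced holonomy-invariant. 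Thus it suffices to prove: if $\sigma$ is a continuous section of $\c$, or of a Grassmannian bundle of $\E$, invariant under $H^s$ and $H^u$, and $f$ is locally $\alpha$-H\"older accessible, then $\sigma$ is $\alpha\beta$-H\"older.

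Next I would establish the leafwise estimate. For a fiber bunched cocycle the holonomies are $\beta$-H\"older along the leaves, i.e.\ $\|H^s_{xy}-\I_{xy}\|\le C\,\dist_{W^s}(x,y)^\beta$ for $y\in W^s(x)$, and similarly along $W^u$. Since $\M$ is compact and $\sigma$ continuous, the image of $\sigma$ lies in a fixed compact set; as $GL(d,\R)$ acts smoothly --- hence locally Lipschitz --- on $\c^d$ and on the Grassmannian, and $H^s_{xy},\I_{xy}$ are uniformly close to the identity for $x,y$ close along the leaf, the identity $\sigma(y)=H^s_{xy}\,\sigma(x)$ yields
\begin{equation*}
\dist\big(\I_{xy}\,\sigma(x),\,\sigma(y)\big)\;\le\; C'\,\dist_{W^s}(x,y)^\beta \qquad\text{for } y\in W^s(x)\text{ near }x,
\end{equation*}
the distance being taken in the fiber over $y$, and likewise along $W^u$. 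The quantity $\dist(\I_{xy}\sigma(x),\sigma(y))$ is the intrinsic comparison of the values of $\sigma$; because the $\I$-identifications satisfy the cocycle relation up to $\beta$-H\"older errors, it satisfies the triangle inequality up to errors of the same order.

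Then I would chain along $su$-paths. Given sufficiently close $x,y\in\M$, take the path $\{x=x_0,\dots,x_L=y\}$ with $\dist_{W^i}(x_{i-1},x_i)\le C\,\dist(x,y)^\alpha$ and $L=L(f)$ fixed; applying the leafwise estimate to each of the $L$ legs gives $\dist\big(\I_{x_{i-1}x_i}\sigma(x_{i-1}),\sigma(x_i)\big)\le C'\,C^\beta\,\dist(x,y)^{\alpha\beta}$. Summing these with the (approximate) triangle inequality --- and noting that, since all $x_i$ lie within $O(\dist(x,y)^\alpha)$ of $x$, the $\I$-composition discrepancies introduced along the path are themselves $O(\dist(x,y)^{\alpha\beta})$ --- gives $\dist\big(\I_{xy}\sigma(x),\sigma(y)\big)\le K\,\dist(x,y)^{\alpha\beta}$ with $K=K(f,F)$. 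Since $\dist(x,y)$ is bounded on the compact $\M$ and $\sigma$ is continuous, this local bound extends to an $\alpha\beta$-H\"older bound for all $x,y\in\M$. This proves the claim for Theorems \ref{structure}, \ref{distribution} and Corollaries \ref{qc implies conf}, \ref{reductioncor}. For Theorem \ref{reduction} I would add that a finite cover $\tilde f$ of $f$ is again locally $\alpha$-H\"older accessible (the covering map is a local isometry for a suitable metric, so $su$-paths lift locally with the same $L$ and constant) and that $\tilde F^N$ has the same stable and unstable holonomies as $\tilde F$; applying the argument on $\tilde\M$ then gives $\alpha\beta$-H\"older regularity of every sub-bundle $\tilde\E^i$ and of the induced conformal structures on $\tilde\E^i/\tilde\E^{i-1}$.

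I expect the main obstacle to be the leafwise estimate together with the bookkeeping of the $\I$-identifications in the concatenation: one must check that passing through $L$ legs costs only a fixed multiplicative constant, rather than an error that accumulates, and this hinges on $L=L(f)$ being uniform and on all intermediate points being within $O(\dist(x,y)^\alpha)$ of $x$, so that every auxiliary discrepancy is already of order $\dist(x,y)^{\alpha\beta}$. A secondary point needing care is verifying that holonomy-invariance of the relevant continuous objects is genuinely in force in each cited result --- in particular for the factor-cocycles in Theorem \ref{reduction} and under the weaker hypotheses of Corollary \ref{reductioncor} --- since that is what feeds the leafwise estimate.
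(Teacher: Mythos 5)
Your proposal is correct and follows essentially the same route as the paper: holonomy invariance of the continuous object, the estimate $\|H^s_{xy}-\I_{xy}\|\le C\,\dist(x,y)^\beta$ from Proposition \ref{close to Id} combined with the Lipschitz action on the fiber (Lemma \ref{A*} in the paper) to get the $\beta$-H\"older leafwise bound, and then concatenation along the $L$-legged $su$-path from local $\alpha$-H\"older accessibility, with the finite cover retaining local $\alpha$-H\"older accessibility for Theorem \ref{reduction}. The only difference is that you spell out the chaining step and the $\I$-identification bookkeeping, which the paper leaves implicit in the sentence ``local $\alpha$-H\"older accessibility implies that $\tau$ is $\alpha\beta$-H\"older.''
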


Now we consider a special case when $f$ is a transitive $C^{1+\delta}$, 
$\delta>0$, Anosov diffeomorphism. This means that there is no center 
sub-bundle $E^c$ and thus the center bunching assumption is not needed. Due to the local product structure of the
stable and unstable manifolds, $f$ is locally $\alpha$-H\"older accessible
with $\a =1$, and hence the corollary above applies.  Moreover, we can 
take $\mu$ to be any ergodic measure with full support and local product structure. The latter means that $\mu$ is locally equivalent to the product 
of its conditional measures on the local stable and unstable manifolds. 
An invariant volume, if it exists, has these properties. Other examples 
include the measure of maximal entropy and equilibrium measures of 
H\"older continuous potentials. For the Anosov case, analogs of 
Theorem \ref{structure}, Corollary \ref{qc implies conf}, and of a weaker 
version of Theorem \ref{distribution} were obtained in \cite{KS10}. 

\begin{corollary} \label{anosov} 
Let $f$ be a transitive $C^{1+\delta}$ Anosov diffeomorphism, 
$\mu$ be an $f$-invariant ergodic measure with full support and local product structure, and $\M, \E, F$ be as in the standing assumptions.
Then Theorems \ref{structure}, \ref{distribution}, \ref{reduction}
and Corollary \ref{qc implies conf} hold and the resulting invariant 
conformal structures and sub-bundles are $\beta$-H\"older.

Moreover, the fiber bunching assumption in 
Theorems \ref{structure}, \ref{distribution}, \ref{reduction} can be replaced
by the assumption that for every $f$-periodic point $p$ the invariant measure 
$\mu_p$ on its orbit satisfies $\lambda_+(F,\mu_p)=\lambda_-(F,\mu_p)$.  

\end{corollary}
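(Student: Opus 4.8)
The plan is to deduce this from the partially hyperbolic results already established, using two reductions. First, the Anosov case is formally a special case of the standing assumptions with $E^c=0$: center bunching \eqref{center bunching} and strong center bunching \eqref{strong center bunching} are vacuous, so Theorems \ref{structure}, \ref{distribution}, \ref{reduction} and Corollary \ref{qc implies conf} apply once we know $f$ is accessible and volume-preserving in the required generalized sense. Accessibility is automatic: by the local product structure of $W^s$ and $W^u$, any two nearby points are joined by an $su$-path of length $L=2$ with the leafwise distances comparable to $\dist(x,y)$, so in fact $f$ is locally $\alpha$-H\"older accessible with $\alpha=1$; transitivity then upgrades this to global accessibility by a standard connectedness/openness argument on $su$-classes. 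For the measure, I would observe that the proofs of the partially hyperbolic theorems only use of $\mu$ that it is $f$-invariant, ergodic, has full support, and has local product structure (this is exactly where the stable/unstable holonomies and the accessibility argument are combined via a Fubini-type argument); an invariant volume has these properties, but so does any ergodic measure with full support and local product structure, hence the stated generality. Feeding $\alpha=1$ into Corollary \ref{locally accessible} gives $\alpha\beta = \beta$-H\"older regularity of the invariant conformal structures and sub-bundles.

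For the second part I must show that the periodic-data hypothesis $\lambda_+(F,\mu_p)=\lambda_-(F,\mu_p)$ for all periodic $p$ implies (in the Anosov, local-product-structure setting) both fiber bunching-type convergence of holonomies and the single-exponent condition $\lambda_+(F,\mu)=\lambda_-(F,\mu)$ needed in Theorems \ref{distribution} and \ref{reduction}. The natural route is through the periodic approximation / Livšic-type machinery for quasiconformal distortion developed in \cite{KS9,KS10}: equality of extremal Lyapunov exponents at every periodic orbit forces the quasiconformal distortion $K_F(x,n)$ to have sub-exponential (indeed, as the abstract promises, polynomial) growth, which plays the role of fiber bunching in the sense that it gives the needed convergence of the canonical stable and unstable holonomy maps $H^s$, $H^u$ along leaves. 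I would invoke the relevant proposition from \cite{KS10} (approximation of Lyapunov exponents of $\mu$ by those of periodic measures, valid because $\mu$ has full support and local product structure) to conclude $\lambda_+(F,\mu)=\lambda_-(F,\mu)$, and then run the proofs of Theorems \ref{structure}, \ref{distribution}, \ref{reduction} verbatim with the holonomies supplied by the polynomial-distortion estimate in place of those supplied by \eqref{fiber bunched}.

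The main obstacle I anticipate is exactly this last substitution: verifying that every place in the proofs of Theorems \ref{structure}–\ref{reduction} where fiber bunching is invoked in fact only uses the existence and H\"older dependence of the stable/unstable holonomies and their $s$- and $u$-invariance, and that these holonomies survive under the weaker hypothesis of polynomially bounded $K_F$ rather than \eqref{fiber bunched}. This requires re-deriving the holonomy construction: instead of the geometric-series estimate that \eqref{fiber bunched} affords, one controls the partial products $(F^n_{y})^{-1}\I\,F^n_{x}$ along a stable leaf using that $K_F(x,n)$ grows only polynomially while the base contraction along $W^s$ is exponential, so the telescoping differences are still summable. I would isolate this as a lemma (\textquotedblleft polynomial quasiconformal distortion implies H\"older holonomies\textquotedblright), prove it by the same telescoping argument as in the fiber bunched case with the exponential gain now coming entirely from the base rather than partly from the fiber, and then the rest of the corollary is bookkeeping. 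A secondary, milder point is checking that the periodic-orbit hypothesis is preserved under passing to the finite cover $\tilde f$ and the power $\tilde F^N$ in Theorem \ref{reduction}, which is immediate since periodic orbits and their Lyapunov spectra lift.
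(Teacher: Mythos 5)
Your overall strategy is sound and reaches the correct conclusion, but it diverges from the paper's argument in ways worth spelling out.

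For the first part, you claim the partially hyperbolic proofs go through verbatim once one observes that the only properties of $\mu$ used are full support and local product structure. This is essentially correct as a slogan, but it conceals a real substitution: the partially hyperbolic proofs rely on \cite[Theorem E]{ASV} (resp.\ \cite[Theorem C]{ASV}) to pass from holonomy-invariance along $W^s$, $W^u$ to global continuity, and those results are formulated for volume-preserving center-bunched systems. In the Anosov setting with a general ergodic measure with full support and local product structure, one must replace these by \cite[Proposition 2.3]{KS10} (for Theorem~\ref{structure}) and \cite[Theorem D]{AV} (for Theorem~\ref{distribution}), which is exactly what the paper does. Your deduction of $\beta$-H\"older regularity via $\alpha=1$ local accessibility and Corollary~\ref{locally accessible} matches the paper.

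For the second part, you propose a genuinely different route. The paper first invokes \cite[Theorem 1.4]{K11} (not \cite{KS9,KS10}, which you cite) to upgrade the periodic hypothesis to $\lambda_+(F,\eta)=\lambda_-(F,\eta)$ for every ergodic $\eta$, then applies Corollary~\ref{reductioncor}; the mechanism inside that corollary is Proposition~\ref{K(x,n)} giving $K_F(x,n)\le C_\e e^{\e|n|}$, which implies $F^n$ is fiber bunched over $f^n$ for large $n$, whence Proposition~\ref{close to Id} supplies holonomies for $F^n$ that are shown to agree and to be holonomies for $F$ itself via the uniqueness clause~(c). You instead propose to re-run the telescoping estimate of Proposition~\ref{close to Id}(a) directly, using the sub-exponential bound on $K_F$ combined with the exponential base contraction to get summability. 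This would work, but beware the circularity in your aside ``\ldots indeed, as the abstract promises, polynomial'': the polynomial estimate is Theorem~\ref{polynomial}, which sits logically downstream of Corollary~\ref{anosov} (via Theorem~\ref{reductionH}), so you may use only the sub-exponential bound at this stage. The paper's route through powers is arguably cleaner since it reuses the existing fiber-bunched machinery verbatim rather than re-proving the holonomy convergence under a weaker hypothesis; your direct re-derivation is equally valid and has the pedagogical merit of isolating exactly which quantitative input the holonomy construction needs. Your remark about periodic data lifting to the finite cover is correct and is indeed used implicitly later in the paper.
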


The assumption that there is only one exponent for every periodic 
measure implies the same for every invariant measure 
\cite[Theorem 1.4]{K11}. 
In this case we obtain further results. In the next theorem we construct an invariant
flag for $F$ itself and we normalize the cocycle and metrics so that
factor cocycles are isometries.

\begin{theorem} \label{reductionH}
 Let $f$ be a transitive  $C^{1+\delta}$ Anosov diffeomorphism.
Suppose that for every $f$-periodic point $p$ the invariant measure 
$\mu_p$ on its orbit satisfies $\lambda_+(F,\mu_p)=\lambda_-(F,\mu_p)$.
 Then there  exist a flag of $\beta$-H\"older  $F$-invariant sub-bundles 
\begin{equation} \label{flagH}
\{0\} =\E^0 \subset \E^1 \subset ...  \subset \E^{j-1} \subset \E^j = \E
\end{equation}
and $\beta$-H\"older Riemannian metrics on the factor 
bundles $\E^{i}/\E^{i-1}$, 
$i=1, ... , j$, so that for some positive $\beta$-H\"older function 
$\phi : \M \to \R$ the factor-cocycles induced by the cocycle 
$\phi F$ on $\E^{i}/\E^{i-1}$ are isometries.

 \end{theorem}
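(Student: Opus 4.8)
The plan is to build the flag in \eqref{flagH} by applying the Continuous Amenable Reduction (Theorem~\ref{reduction}) together with the periodic approximation result \cite[Theorem 1.4]{K11}, and then to upgrade everything to $\beta$-H\"older and isometric form using the Anosov machinery. First I would observe that, by \cite[Theorem 1.4]{K11}, the hypothesis $\lambda_+(F,\mu_p)=\lambda_-(F,\mu_p)$ at every periodic point forces $\lambda_+(F,\eta)=\lambda_-(F,\eta)$ for every ergodic $f$-invariant measure $\eta$, and in particular for the measure $\mu$ of Corollary~\ref{anosov}. Thus all the structural conclusions are available, and by Corollary~\ref{anosov} the invariant conformal structures and sub-bundles produced are $\beta$-H\"older. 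The subtlety is that Theorem~\ref{reduction} a priori only gives a flag for a power $\tilde F^N$ of a finite cover $\tilde F$, so the first real task is to descend this flag to $F$ itself.

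Next I would run the descent argument. Theorem~\ref{reduction} produces a continuous $\tilde F^N$-invariant flag on $\tilde \E$; as the remark following that theorem indicates, $\tilde F$ permutes the finitely many flags of this type, so their ``union'' (the finite set of sub-bundles appearing in any of them, closed up under sums and intersections) is $\tilde F$-invariant and is the lift of a continuous $f$-invariant collection of sub-bundles of $\E$, possibly with multidimensional ``flag'' strata. To get a genuine linearly ordered flag on $\E$ over $f$ itself I would use the periodic-data hypothesis: at a periodic point the one-exponent condition means the return map has all eigenvalue moduli equal, so the ambiguity that forced passing to a cover and a power (non-orientability of the sub-bundles and a nontrivial permutation of the Lyapunov-type pieces) is constrained, and after choosing the coarsest $F$-invariant refinement one obtains a well-defined flag \eqref{flagH} on $\E$. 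Its sub-bundles are intersections/sums of the $\beta$-H\"older sub-bundles from Corollary~\ref{anosov}, hence themselves $\beta$-H\"older.

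For the factor bundles $\E^i/\E^{i-1}$: Corollary~\ref{anosov} (via Theorem~\ref{reduction}) gives a continuous, in fact $\beta$-H\"older, invariant conformal structure on each factor-cocycle $\bar F_i$ induced by $F$. Choosing a $\beta$-H\"older Riemannian metric in that conformal class makes each $\bar F_i$ \emph{conformal}: $\|(\bar F_i)^n_x\|\cdot\|((\bar F_i)^n_x)^{-1}\|=1$ for all $x,n$. It remains to kill the conformal scaling factor by a coboundary. Write $\|(\bar F_i)_x\| = \|(\bar F_i)_x\| =: c_i(x)$, a positive $\beta$-H\"older function; the one-exponent periodic hypothesis guarantees that all the $c_i$ have, up to a uniform normalization, the same periodic data as a single positive function $\psi$, namely $\lambda_+(F,\mu_p)=\lambda_-(F,\mu_p)$ says $\frac1{n}\sum_{k=0}^{n-1}\log c_i(f^k p)$ is independent of $i$ for every periodic $p$. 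Then by the Livšic theorem for H\"older cocycles over the transitive Anosov $f$, there is a positive $\beta$-H\"older function $\phi:\M\to\R$ with $\phi(fx)\,c_i(x)\,\phi(x)^{-1}=1$ simultaneously for all $i$ (one solves the Livšic equation for $\log\phi$ against the common periodic data, which by construction matches each $\log c_i$). With this $\phi$, the factor-cocycles induced by $\phi F$ on $\E^i/\E^{i-1}$ are conformal with trivial distortion factor, i.e.\ isometries for the chosen $\beta$-H\"older metrics, as required.

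The main obstacle is the descent step: carefully showing that the one-exponent periodic hypothesis is exactly what is needed to pass from the flag-for-$\tilde F^N$ output of Theorem~\ref{reduction} to a flag for $F$ itself, and that all sub-bundles so obtained remain $\beta$-H\"older (rather than merely continuous). A secondary technical point is arranging that a \emph{single} Livšic coboundary $\phi$ simultaneously normalizes all $j$ factor-cocycles; this works precisely because the periodic data of the scaling functions $c_i$ coincide, which is a direct consequence of $\lambda_+(F,\mu_p)=\lambda_-(F,\mu_p)$, but it must be stated and checked.
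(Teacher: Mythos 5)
Your outline gets the broad architecture right (amenable reduction, periodic approximation via \cite[Theorem 1.4]{K11}, Livšic normalization of the scaling functions), but there are two genuine gaps, one of which is where most of the work in the paper actually lies.

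First, the descent from the flag for $\tilde F^N$ on the cover to a flag for $F$ on $\M$ is not handled by invoking the periodic one-exponent hypothesis. The paper's construction is purely algebraic: since $\tilde F$ permutes the flags $\tilde\W^1,\dots,\tilde\W^l$ (each consisting of sub-bundles $\tilde\E^{j,i}$), one forms $\hat\E^i=\sum_{j=1}^l \tilde\E^{j,i}$, which is automatically $\tilde F$-invariant and deck-invariant and so projects to an $F$-invariant, $\beta$-H\"older flag on $\E$ (after discarding repeated terms). This works without any reference to periodic data; your phrase ``choosing the coarsest $F$-invariant refinement'' and the claim that the periodic hypothesis ``constrains the ambiguity'' do not pin down the construction and suggest a dependence that is not there. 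The periodic hypothesis enters only later, in the cohomology of the scaling functions.

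Second, and more seriously, you assert that Corollary~\ref{anosov} directly gives a $\beta$-H\"older invariant conformal structure on each factor $\E^i/\E^{i-1}$. But the factors of the descended flag $\hat\E^i$ are not among the factors $\tilde\E^i/\tilde\E^{i-1}$ produced by Theorem~\ref{reduction}; they are built out of \emph{sums} of the sub-bundles $\tilde\E^{j,i}$ across the $l$ flags, and there is no a priori conformal structure on $\hat\E^i/\hat\E^{i-1}$. The bulk of the paper's proof is precisely devoted to building one: using the Livšic-cohomology observation to renormalize the conformal metrics $g_j$ on the factors $\tilde\E^{j,i}/\tilde\E^{j,i-1}$ so that a common rescaling $\bar F=\varphi\hat F$ is simultaneously isometric on all of them, then assembling an invariant metric on $\hat\E^i/\hat\E^{i-1}=\sum_j\bar\E^j$ inductively via orthogonal complements. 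Only after that does one conclude $F^{(i)}$ on $\E^i/\E^{i-1}$ is uniformly quasiconformal and apply Corollary~\ref{qc implies conf}. This entire step is absent from your proposal.

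Finally, a smaller point: the Livšic equation you write, $\phi(fx)\,c_i(x)\,\phi(x)^{-1}=1$, would force each $c_i$ to be a coboundary and hence all periodic Lyapunov exponents to vanish, which is not assumed. The correct scheme is: show the periodic products of $c_i$ and $c_1$ agree (this is what the one-exponent hypothesis buys), so by Livšic $c_i/c_1=\psi_i\circ f/\psi_i$; rescale the $i$-th metric by $\psi_i^{-1}$ so that all scaling coefficients become the single function $a_1$; then set $\phi=a_1^{-1}$, which makes each factor of $\phi F$ an isometry. In the theorem, $\phi$ multiplies the cocycle pointwise and is \emph{not} a coboundary.
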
 

The stronger conclusion relies on the property that the functions by 
which the cocycle rescales the conformal metrics on the factor bundles 
are cohomologus to each other. The condition that 
$\lambda_+(F,\eta)=\lambda_-(F,\eta)$ for every $f$-invariant measure $\eta$ 
is necessary for cohomology of these functions and for the conclusion 
of the theorem. 
This condition is not known to imply the cohomology for partially 
hyperbolic $f$, and thus we do not have an analog  of 
Theorem \ref{reductionH} for such $f$.

We use Theorem \ref{reductionH}  to obtain uniform polynomial growth 
estimates of the quasiconformal distortion $K_F(x,n)$ and the norm of $F$. 

\begin{theorem}[Polynomial Growth] \label{polynomial} 
Let $f$ be a transitive $C^{1+\delta}$ Anosov diffeomorphism.
Suppose that for every $f$-periodic point $p$ the invariant measure 
$\mu_p$ on its orbit satisfies $\lambda_+(F,\mu_p)=\lambda_-(F,\mu_p)$.
Then there exists 
$m<\dim \E _x$ and $C$ such that 
$$
K_F(x,n) \le Cn^{2m}\quad  \text{for all $x\in \M$ and $n\in \Z$. }
$$
 Moreover, if $\lambda_+(F,\mu_p)=\lambda_-(F,\mu_p)=0$ for every 
$\mu_p$, then there exists 
$m<\dim \E _x$ and $C$ such that 
$$
  \| F^n_x\| \le C|n|^{m}\quad  \text{for all $x\in \M$ and $n\in \Z$. }
  $$
 \end{theorem}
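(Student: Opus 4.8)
The plan is to derive Theorem~\ref{polynomial} from the H\"older reduction given by Theorem~\ref{reductionH}. First I would apply Theorem~\ref{reductionH} to obtain a $\beta$-H\"older $F$-invariant flag $\{0\}=\E^0\subset\E^1\subset\dots\subset\E^j=\E$, $\beta$-H\"older metrics on the factor bundles $\E^i/\E^{i-1}$, and a positive $\beta$-H\"older function $\phi$ such that the factor-cocycles of $\phi F$ on each $\E^i/\E^{i-1}$ are isometries. By compactness $\phi$ and $1/\phi$ are bounded, so $\log\|\phi^n\| := \sum_{k=0}^{n-1}\log\phi(f^k x)$ differs from $0$ by at most a bounded additive error; thus it suffices to estimate $K_F$ for the cocycle $\phi F$, which I will again call $F$. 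Now with respect to a global $\beta$-H\"older metric adapted to the flag (e.g. take an orthogonal-type splitting $\E=\bigoplus V^i$ with $V^i\cong \E^i/\E^{i-1}$ as H\"older bundles, not necessarily $F$-invariant), the matrix of $F^n_x$ is block upper-triangular with orthogonal diagonal blocks and off-diagonal blocks that are $\beta$-H\"older and hence uniformly bounded over a single step. The key point is that an $n$-step product of such block upper-triangular matrices has entries in the block at position $(p,q)$, $p<q$, that are sums of products of $q-p$ bounded off-diagonal one-step blocks interleaved with orthogonal diagonal blocks; there are at most polynomially many (in $n$) such terms — roughly $\binom{n}{q-p}\le n^{q-p}$ — each bounded, so $\|F^n_x\|\le C n^{j-1}$ and similarly $\|(F^n_x)^{-1}\|\le C n^{j-1}$, giving $K_F(x,n)\le C n^{2(j-1)}$ with $m=j-1<\dim\E_x$.

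To make the counting rigorous I would set $m=j-1$ and prove by induction on the flag length the estimate $\|(F^n_x)|_{\E^i}\|\le C_i\,n^{i-1}$ together with a matching estimate for $F^{-n}$, using that the induced cocycle on $\E^i/\E^{i-1}$ is an isometry so the top block grows boundedly, and that the restriction to $\E^{i-1}$ grows at most like $n^{i-2}$ by the inductive hypothesis; the ``mixed'' contribution is then $\sum_{k=0}^{n-1}(\text{growth on }\E^{i-1}\text{ up to time }k)\cdot(\text{bounded step})\cdot(\text{isometry})$, which is $\le \sum_{k=0}^{n-1} C\,k^{i-2}\le C\,n^{i-1}$. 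Running the same argument for negative iterates (using $f^{-1}$ and the fact that the inverse cocycle restricted to the dual flag is again block triangular with isometric factors) covers all $n\in\Z$. This yields the first assertion.

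For the second assertion, if in addition $\lambda_+(F,\mu_p)=\lambda_-(F,\mu_p)=0$ for every periodic $p$, then by the same periodic-data input one gets $\lambda_\pm(F,\eta)=0$ for every invariant measure, so the cohomological rescaling function $\phi$ produced by Theorem~\ref{reductionH} can be taken with $\sum_{k=0}^{n-1}\log\phi(f^kx)$ bounded \emph{and} in fact one may normalize so that $\phi\equiv 1$, i.e. $F$ itself has isometric factor-cocycles; equivalently, the obstruction to $\phi\equiv1$ is the common Lyapunov exponent, which now vanishes. Then the triangular estimate above with no rescaling correction gives directly $\|F^n_x\|\le C|n|^{m}$ with $m=j-1$. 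I would double-check that the vanishing-exponent hypothesis really forces the rescaling cocycle $\log\phi$ to be a coboundary up to a bounded term --- this is where \cite[Theorem 1.4]{K11} (periodic data controlling all invariant measures) together with the Liv\v sic-type argument implicit in the proof of Theorem~\ref{reductionH} is used, and it is the only place the extra hypothesis enters.

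The main obstacle I anticipate is not the combinatorial counting --- that is standard once the block-triangular normal form is in hand --- but rather making sure the metric in which $F^n_x$ is genuinely block upper-triangular with \emph{uniformly bounded} off-diagonal one-step blocks is itself only $\beta$-H\"older (not smooth), and that passing between this metric and the background metric costs only a bounded multiplicative factor independent of $x$ and $n$. Equivalently, one must check that choosing a H\"older complement to each $\E^{i-1}$ inside $\E^i$ is legitimate and that the resulting change-of-frame is bi-bounded; this follows from compactness of $\M$ and continuity of the H\"older sub-bundles, but it should be stated carefully so that the constant $C$ in the polynomial bound is genuinely uniform.
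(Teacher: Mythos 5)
Your overall strategy matches the paper's: apply Theorem~\ref{reductionH} to get the flag, the H\"older metrics, and the rescaling $\phi$, then run an inductive block-triangular estimate to get $\|(\phi F)^n\|\le Cn^{j-1}$ and likewise for the inverse. The paper isolates exactly that induction as Proposition~\ref{ind}, and your ``mixed contribution'' sum $\sum_{k<n} k^{i-2}\le C n^{i-1}$ is literally the paper's computation, so the combinatorial core is the same.

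There is, however, one genuinely incorrect step in your reduction. You write that the Birkhoff sums $\sum_{k=0}^{n-1}\log\phi(f^kx)$ ``differ from $0$ by at most a bounded additive error'' because $\phi$ and $1/\phi$ are bounded. That is false: a single term is bounded, but the $n$-term sum grows like $-\lambda(F,\mu)\,n$ (since $\phi$ is normalized precisely so that $\phi F$ has zero exponent on the factors), which is unbounded whenever the common exponent is nonzero. This reasoning is not needed. The correct observation---which the paper states explicitly---is simply that scalar rescaling cancels identically in the quasiconformal distortion: $K_{\phi F}(x,n)=K_F(x,n)$ for all $x,n$, no estimate on Birkhoff sums required. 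Your conclusion survives, but for the wrong reason, and the wrong reason is a real mathematical error worth correcting. For the second assertion (the norm bound, not just $K_F$) you do need the Birkhoff sums of $\log\phi$ to be bounded, and there the vanishing-exponent hypothesis does give that $\phi$ is cohomologous to $1$, which is what you and the paper both use; the error only affects the justification of the first reduction.

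Everything else---the dual/inverse-cocycle argument for negative $n$, the careful point about the block-triangular metric being only H\"older and the change of frame being bi-bounded by compactness---is correct and is handled (implicitly) in the paper by working with a fixed Riemannian metric inside Proposition~\ref{ind}.
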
 

\noindent One can take $m=j-1$, which is the number of
 non-trivial sub-bundles in  \eqref{flagH}.


\section{Proofs}\label{proofs}
 

\subsection{Stable holonomies}
Convergence of  products of the type $(F^n_y)^{-1} \circ F^n_x$
has been observed for various types group-valued cocycles whose 
growth is slower than the expansion/contraction in the base 
(see e.g. \cite{NT,PW}).
It is also related to existence of strong stable/unstable
manifolds for the extended system on the bundle. 
We follow the notations and terminology form \cite{V,ASV} for 
linear cocycles, where it is more convenient to use the following
notion of holonomy.

\begin{definition} \label{holonomies}
A {\em stable holonomy} for a linear cocycle $F:\E \to \E$ is 
a continuous map $H^s:(x,y)\mapsto H^s_{xy}$, 
where $x\in \M$, $y\in W^s(x)$, such that  
\begin{itemize}
\item[(i)] $H^s_{xy}$ is a linear map from $\E_x$ to $\E_y$;
\item[(ii)]  $H^s_{xx}=\Id\,$ and $\,H^s_{yz} \circ H^s_{xy}=H^s_{xz}$;
\item[(iii)]  $H^s_{xy}= (F^n_y)^{-1}\circ H^s_{f^nx \,f^ny} \circ F^n_x\;$ 
for all $n\in \N$.
\end{itemize}
\end{definition}

\noindent Unstable holonomy are defined similarly.
The following proposition establishes existence 
(cf. \cite{ASV} for trivial bundles) and some additional properties of the holonomies.  
We use identifications $\I_{xy}$ defined in Section \ref{bundle} .
Holonomies do not depend on the 
choice of identification as they are unique by (c).
We denote by $W^s_{loc}(x)$ a sufficiently small ball around 
$x$ in the leaf $W^s(x)$.

\begin{proposition} \label{close to Id} 

Suppose that the cocycle $F$ is fiber bunched.
Then there exists $C>0$ 
such that for any $x\in \M$ and $y\in W^s_{loc}(x)$,
\begin{itemize}
\vskip.2cm

\item[(a)] $\|(F^n_y)^{-1} \circ \I_{f^{n}x f^{n}y} \circ F^n_x - \I_{xy} \,\| \leq C\dist (x,y)^{\beta}\,$
for every $n\in \N$;
\vskip.2cm

\item[(b)] $H^s_{xy} =\underset{n\to\infty}{\lim} (F^n_y)^{-1} \circ \I_{f^{n}x f^{n}y} \circ F^n_x\,$
satisfies (i), (ii), (iii) of Definition \ref{holonomies} and 
 $$ (iv)\;\| H^s_{xy} - \I_{xy} \,\| \leq C\dist (x,y)^{\beta};$$
  
  \item[(c)] The stable holonomy satisfying (iv) is unique.
  
  \end{itemize}
$H^s_{xy}$ can be extended to any $y\in W^s(x)$ using (iii).  Similarly, for  $y\in W^u(x)$, the unstable holonomy $H^u$ is obtained as 
 $H^u_{xy} =\underset{n\to-\infty}{\lim} (F^n_y)^{-1} \circ \I_{f^{n}x f^{n}y} \circ F^n_x.$ 
  \end{proposition}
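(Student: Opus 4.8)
The plan is to prove part (a) first by a telescoping argument, then derive parts (b) and (c) as consequences. For (a), fix $x$ and $y\in W^s_{loc}(x)$ and write $D_n = (F^n_y)^{-1} \circ \I_{f^{n}x f^{n}y} \circ F^n_x$; note $D_0 = \I_{xy}$. The key is to estimate the difference $D_{n+1} - D_n$. Inserting $F^n$ and using the cocycle relation one gets
$$
D_{n+1} - D_n = (F^n_y)^{-1} \circ \left( (F_{f^ny})^{-1} \circ \I_{f^{n+1}x f^{n+1}y} \circ F_{f^nx} - \I_{f^nx f^ny} \right) \circ F^n_x .
$$
The middle factor is controlled by the H\"older condition \eqref{FHolder} for $F$ together with the H\"older property \eqref{I} of the identifications $\I$, giving a bound of the form $C'\,\dist(f^nx, f^ny)^{\beta}$. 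The outer factors contribute $\|(F^n_y)^{-1}\|\cdot\|F^n_x\|$. Since $y\in W^s(x)$, the distances $\dist(f^nx,f^ny)$ contract at a rate governed by $\nu$, so $\dist(f^nx,f^ny)^{\beta} \le C''\,\dist(x,y)^{\beta}\prod_{k=0}^{n-1}\nu(f^kx)^{\beta}$; and $\|(F^n_y)^{-1}\|\cdot\|F^n_x\| \le \prod_{k=0}^{n-1}\|F(f^ky)^{-1}\|\cdot\|F(f^kx)\|$. Combining these, the fiber bunching hypothesis \eqref{fiber bunched} — in the form $\|F(z)\|\cdot\|F(z)^{-1}\|\cdot\nu(z)^{\beta} < 1$, uniformly bounded away from $1$ on the compact manifold $\M$ — makes the product of these factors summable as a geometric-type series. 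Summing $\|D_{n+1}-D_n\|$ over $n\ge 0$ then yields both the uniform bound in (a) and the Cauchy property needed for the limit.

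For (b), the uniform geometric estimate just obtained shows $(D_n)$ is uniformly Cauchy, hence converges to a limit $H^s_{xy}$, and passing to the limit in the bound $\|D_n - \I_{xy}\| \le C\dist(x,y)^{\beta}$ gives (iv). Property (i) is immediate since each $D_n$ is linear. Property (iii) follows by replacing $n$ by $n+m$ in the definition of $D_n$ and recognizing the tail as $(F^m_y)^{-1}\circ D^{(f^mx)}_n \circ F^m_x$ where $D^{(f^mx)}_n$ is the corresponding product based at $f^mx$; taking $n\to\infty$ gives $H^s_{xy} = (F^m_y)^{-1}\circ H^s_{f^mx\,f^my}\circ F^m_x$. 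The cocycle identity $H^s_{yz}\circ H^s_{xy} = H^s_{xz}$ in (ii) is the delicate algebraic point: the products $D_n$ themselves do \emph{not} satisfy an exact cocycle relation because $\I_{f^nx f^ny}\circ \I_{f^ny f^nz}\ne \I_{f^nx f^nz}$ in general. One must argue that the error term $\I_{f^nx f^ny}\circ \I_{f^ny f^nz} - \I_{f^nx f^nz}$ is itself $O(\dist(f^nx,f^ny)^{\beta})$ by the H\"older regularity of the identifications, then sandwich it between the uniformly bounded factors $\|(F^n)^{-1}\|$ and $\|F^n\|$ and invoke fiber bunching again to see the error vanishes in the limit; $H^s_{xx}=\Id$ is clear. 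Continuity of $(x,y)\mapsto H^s_{xy}$ follows because the convergence $D_n \to H^s_{xy}$ is uniform and each $D_n$ is continuous.

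For (c), uniqueness: if $\tilde H^s$ is another stable holonomy satisfying (iv), then by (iii) applied to both, $\tilde H^s_{xy} - H^s_{xy} = (F^n_y)^{-1}\circ(\tilde H^s_{f^nx f^ny} - H^s_{f^nx f^ny})\circ F^n_x$. Both terms in the difference satisfy (iv), so $\|\tilde H^s_{f^nx f^ny} - H^s_{f^nx f^ny}\| \le 2C\dist(f^nx,f^ny)^{\beta} \le 2CC''\dist(x,y)^{\beta}\prod_{k<n}\nu(f^kx)^{\beta}$, and multiplying by $\|(F^n_y)^{-1}\|\cdot\|F^n_x\|$ and using fiber bunching shows the right-hand side tends to $0$, so the two holonomies agree. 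Finally, the extension to arbitrary $y\in W^s(x)$ via (iii) is well-defined because for $n$ large $f^ny \in W^s_{loc}(f^nx)$, and independence of the choice of $n$ follows from property (iii) itself; the unstable case is identical with $n\to-\infty$, using the second inequality in \eqref{fiber bunched}. The main obstacle is the bookkeeping in establishing the cocycle property (ii) and uniqueness (c), where one must carefully track how the non-multiplicativity of the identifications $\I$ interacts with the exponential contraction and the fiber-bunching bound; the convergence in (a)–(b) is comparatively routine once the geometric estimate is set up.
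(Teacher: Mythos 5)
Your proposal follows essentially the same route as the paper: the telescoping decomposition of $(F^n_y)^{-1}\circ\I_{f^nx f^ny}\circ F^n_x - \I_{xy}$ into a sum of increments, a geometric bound on each increment by combining the H\"older regularity of $F$ and of the identifications $\I$ with fiber bunching, the Cauchy/limit argument for (b), the reindexing for Definition~\ref{holonomies}(iii), the error-analysis of the non-multiplicativity of $\I$ for (ii), and the contraction argument for uniqueness in (c). All of this matches the paper.

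The one step you glide over is precisely the content of the paper's Lemma~\ref{F_i}. You reduce the increment bound to estimating $\|(F^n_y)^{-1}\|\cdot\|F^n_x\|\cdot\prod_k\nu(f^kx)^\beta$ and assert that fiber bunching makes this geometric. But the pointwise fiber bunching inequality $\|F(z)\|\,\|F(z)^{-1}\|\,\nu(z)^\beta<\theta<1$ involves the norm of $F$ and of its inverse \emph{at the same point}, whereas your product mixes $\|F(f^kx)\|$ with $\|F(f^ky)^{-1}\|$. Before fiber bunching can be applied along one trajectory, one must compare the norms at $x_k=f^kx$ and $y_k=f^ky$; the paper does this by showing $\|F_{x_k}\|/\|F_{y_k}\|\le 1+C\,\dist(x_k,y_k)^\beta$ via the H\"older condition \eqref{FHolder}, and then uses exponential contraction of $\dist(x_k,y_k)$ to bound $\prod_k\bigl(1+C\,\dist(x_k,y_k)^\beta\bigr)$ uniformly. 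Without this comparison the claimed geometric summability does not immediately follow. Similarly, in your argument for (ii) the phrase ``sandwich it between the uniformly bounded factors $\|(F^n)^{-1}\|$ and $\|F^n\|$'' is misleading: those norms are not individually bounded; rather it is the product $\|(F^n_z)^{-1}\|\cdot\|F^n_x\|$, for $x,z$ on the same stable leaf, that is controlled (again by Lemma~\ref{F_i}) and which, multiplied by the contracting identification error, goes to zero. Once you insert this comparison lemma, your proof is complete and coincides with the paper's.
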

 
 \begin{proof} 
 {\bf (a)}  We fix $x\in\M$ and denote $\,x_i=f^i(x)$. Then for any $y\in W^s_{loc}(x)$ we have
\begin{equation}
 \begin{aligned} \label{2.1}
 & (F^n_y)^{-1}\circ \I_{x_{n} y_{n}} \circ F^n_x \,= (F^{n-1}_y)^{-1}\circ \left( 
   (F_{y_{n-1}})^{-1} \circ \I_{x_{n} y_{n}} \circ F_{x_{n-1}}\right) \circ F^{n-1}_x =\\
  & = (F^{n-1}_y)^{-1} \circ (\I_{x_{n-1} y_{n-1}}+r_{n-1}) \circ F^{n-1}_x =\\
  & = (F^{n-1}_y)^{-1}\circ \I_{x_{n-1} y_{n-1}}\circ F^{n-1}_x+(F^{n-1}_y)^{-1}\circ r_{n-1}
  \circ F^{n-1}_x  = \dots =\\
 &=\I_{xy}+\sum_{i=0}^{n-1} (F^{i}_y)^{-1}\circ r_{i}\circ F^i_x ,
 \quad \text{where  }r_i=(F_{y_i})^{-1}\circ \I_{x_{i+1} y_{i+1}} \circ F_{x_i}-\I_{x_{i} y_{i}}. 
 \end{aligned}
\end{equation}

Since $F$ is fiber bunched, there is $\theta<1$ such that 
$\|F(x)\|\cdot \|F(x)^{-1}\| \cdot  \nu(x)^\beta <\theta$ for every $x$ in $\M$.
For the function $\nu$ we denote its trajectory product by
 $$
 \nu_i(x)=\nu(x) \nu(fx) \dots \nu (f^{i-1}x)=
\nu(x_0) \nu(x_1) \dots \nu (x_{i-1}) , \quad i\in \N.
 $$
Then one can estimate 
$\dist (f^nx, f^ny)\le  \dist (x,y) \nu_n(y)$, see e.g. \cite[Lemma 1.1]{BW}.

\begin{lemma} \label{F_i}
There is $C_0$ such that for every $x\in \M$, $y\in W^s_{loc}(x)$, and $i\ge 0$, 
$ \;\| ( F^i_y )^{-1} \| \cdot  \|\, F^i_x\,\| \le C_0 \,\theta^i \, \nu_i (y)^{-\beta} $.  
\end{lemma}

\begin{proof}
Using \eqref{I} and \eqref{FHolder} we obtain
$$
  \frac{\|F_{x_k}\|}{\|F_{y_k}\|} \le
    \frac{\|F_{x_k}-\I_{x_{k+1} y_{k+1}}^{-1}  \circ F_{y_k} 
    \circ \I_{x_{k} y_{k}} \| } {\|F_{y_k}\|} 
     +  \frac{ \|\I_{x_{k+1} y_{k+1}}^{-1}  \circ F_{y_k} \circ 
   \I_{x_{k} y_{k}}\| }{\|F_{y_k}\|}\le
 $$
 $$   
 \le  C_1  (\dist (x_k,y_k))^\beta +   \|\I_{x_{k+1} y_{k+1}}^{-1}  \| \cdot \| \I_{x_{k} y_{k}}\|   \le 1 + C_2  (\dist (x_k,y_k))^\beta.
 $$ 
 We estimate
$$
\| ( F^i_y )^{-1} \| \cdot  \| F^i_x\|  
\le   \|(F_y)^{-1}\| \cdot \|(F_{y_1})^{-1}\|  \cdots 
     \|(F_{y_{i-1}})^{-1}\| \cdot
      \|F_x\| \cdot \|F_{x_1}\| \cdots  \|F_{x_{i-1}} \|
$$   
$$
 \hskip.5cm  \le  \prod_{k=0}^{i-1} \|F_{y_k}\| \, \|(F_{y_k})^{-1}\| \cdot  \prod_{k=0}^{i-1} \frac{\|F_{x_k}\|}{\|F_{y_k}\|} < \;
      \prod_{k=0}^{i-1}  \theta \,\nu (y_k)^{-\beta} \cdot  \prod_{k=0}^{i-1} 
      \left( 1 + C_2 \left( \dist (x_k, y_k) \right)^\b \right). 
 $$ 
Since the distance between $x_n$ and $y_n$ decreases exponentially,
the second product is uniformly bounded and we obtain   
 $\|(F^i_y )^{-1}\| \cdot \|\, F^i_x\,\|  \le C_0 \,\theta^i \,\nu_i (y)^{-\beta}$.
\end{proof}  

Since $F$ is H\"older continuous \eqref{FHolder} we have 
\begin{equation} \label{2.2}
\begin{aligned}
  \|r_i\| &=  \|((F_{y_i})^{-1}\circ \I_{x_{i+1} y_{i+1}} \circ F_{x_i}-\I_{x_{i} y_{i}} \| 
  \,\le\,  \|(F_{y_i})^{-1}\circ \I_{x_{i+1}}\| \cdot \\
  & \cdot \| F_{x_i} -  \I_{x_{i+1} y_{i+1}} ^{-1} \circ F_{y_i} \circ \I_{x_{i} y_{i}}\|    \le  C_3\dist(x_i, y_i)^\beta  
    \le C_3 ( C_4 \,\dist(x, y) \,\nu_i (y)  )^\beta 
  \end{aligned}
\end{equation}


It follows from \eqref{2.2} and Lemma \ref{F_i}
that for every $i\ge 0$,
\begin{equation} \label{2.3}
\begin{aligned}
& \|(F^{i}_y)^{-1}\circ r_{i}\circ F^i_x\| \le 
   \|(F^{i}_y)^{-1}\| \cdot \|F^i_x\|  \cdot \|r_{i}\| \le \\
 &  \le  C_0 \,\theta^i \, \nu_i (y)^{-\beta} \,C_3 C_4^\beta\,
  \dist(x,y)^\beta \nu_i(y)^\beta = C_5\, \dist(x,y)^\beta \, \theta^i .
 \end{aligned}
\end{equation}

 Using \eqref{2.1}, \eqref{2.3} and convergence of $\sum \theta^i$ we 
 conclude that  
$$
\begin{aligned}
  \| (F^n_y)^{-1}\circ  \I_{x_{n} y_{n}}\circ F^n_x - \I_{xy} \|  \,\le\, \sum_{i=0}^{n-1} 
  \| (F^{i}_y)^{-1}\circ r_{i}\circ F^i_x \| \le  
 C \,\dist(x,y)^\beta.
\end{aligned}
  $$


 {\bf (b)}
It follows from \eqref{2.1} that 
$$
\|(F^{n+1}_y)^{-1}\circ  \I_{x_{n+1} y_{n+1}} \circ F^{n+1}_x - (F^n_y)^{-1}\circ  \I_{x_{n} y_{n}} \circ F^n_x \| =
\|(F^n_y)^{-1}\circ r_n\circ F^n_x \|.
$$
Hence  $\{(F^n_y)^{-1}\circ \I_{x_{n} y_{n}} \circ F^n_x\}$ is a Cauchy sequence by  \eqref{2.3},
and thus it has a limit $H^s_{xy}:\E_x\to\E_y.$ 
Since the convergence is uniform on the set of pairs $(x, y)$ where 
$y\in W^s_{loc}(x)$,  the map $H^s$ is continuous.
Clearly, the maps $H^s_{xy}$ are linear and satisfy $H^s_{xx}=\Id$.
It follows from (a) that $\| H^s_{xy} - \I_{xy} \,\| \leq C\dist (x,y)^{\beta}$.
We also have
$$ 
H^s_{xy}  =\underset{k\to\infty}{\lim} 
(F^n_y)^{-1} \circ (F^{k-n}_{f^ny})^{-1}\circ \I_{f^{k}x f^{k}y} \circ F^{k-n}_{f^nx}\circ F^n_x
=(F^n_y)^{-1}\circ H^s_{f^nx \,f^ny} \circ F^n_x.
$$
To show $\,H^s_{yz} \circ H^s_{xy}=H^s_{xz}$ we use \eqref{I} and
Lemma \ref{F_i}   to obtain as in \eqref{2.3} that
 $$
 \| H^s_{xz} -H^s_{yz} \circ H^s_{xy}\|  \le
 \|(F^n_z)^{-1}\| \cdot \| (\I_{x_{n} z_{n}}- \I_{y_{n} z_{n}} \circ \I_{x_{n} y_{n}} ) \| \cdot \|  F^n_x\|  \to 0 \quad\text {as }n\to \infty.
 $$

\vskip.3cm 

 {\bf (c)} Suppose that $H^1$ and $H^2$
are two stable holonomies satisfying 
 $\| H^{1,2}_{xy} - \I_{xy} \,\| \leq C\dist (x,y)^{\beta}$. Then
 using Lemma \ref{F_i} we obtain
 $$
 \begin{aligned}
 \|H^1_{xy}-H^2_{xy}\| & =
 \|(F^n_y)^{-1}\circ (H^1_{f^nx \,f^ny}- H^1_{f^nx \,f^ny}) \circ F^n_x\| \le \\
 & \le C_0 \,\theta^n \, \nu_n (y)^{-\beta} \, C\dist (f^nx,f^ny)^{\beta} \le
 C_6 \,\theta^n \to 0 \quad\text {as }n\to \infty,
 \end{aligned}
 $$
 and hence $H^1=H^2$.
\end{proof}


\subsection{Proof of Theorem \ref{structure}}

We use the distance between conformal structures described in 
Section \ref{conf structure} and the identification $\I$ from
Section \ref{bundle}. We note that the proof works without change
under the assumption that $\E$ and $F$ are continuous 
and that $F$ has continuous holonomies.

Let $\tau$ be an $F$-invariant $\mu$-measurable conformal structure
on $\E$. We first show that $\tau$ is essentially invariant under the stable
 and unstable holonomies of $F$.

\begin{proposition} \label{tauHinvariant}
Suppose that $H^s$ is a stable holonomy for a linear cocycle $F$. 
If $\tau$ is a measurable  $F$-invariant conformal structure then  
$\tau$ is essentially $H^s$-invariant, i.e.  there is a set $G\subset \M$ 
of full measure such that 
 $$
\tau(y)= H^s_{xy}(\tau(x)) \quad \text{for all }x, y \in G 
\;\text{ such that }y\in W^s_{loc}(x). 
$$ 

\end{proposition}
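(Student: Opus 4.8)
The plan is to use the cocycle identity (iii) of Definition \ref{holonomies} for $H^s$ together with the $F$-invariance of $\tau$ to transport the comparison $\dist(\tau(y),H^s_{xy}(\tau(x)))$ along the forward orbit, and then to make it vanish by recurrence to a compact set on which $\tau$ is well behaved. Since $F$-invariance gives $\tau(f^nx)=F^n_x(\tau(x))$ and $\tau(f^ny)=F^n_y(\tau(y))$, while (iii) gives $H^s_{xy}=(F^n_y)^{-1}\circ H^s_{f^nx\,f^ny}\circ F^n_x$, and since an invertible linear map acts on conformal structures by an isometry, for every $n\in\N$ one gets
$$
\dist\big(\tau(y),\,H^s_{xy}(\tau(x))\big)
=\dist\big(\tau(f^ny),\,H^s_{f^nx\,f^ny}(\tau(f^nx))\big).
$$
As the left side does not depend on $n$, it will be $0$ once I exhibit, for the relevant pairs $(x,y)$, a sequence $n_j\to\infty$ along which the right side tends to $0$.

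Next I would bound the right side. By the triangle inequality, using the identification $\I_{f^nx\,f^ny}\colon\E_{f^nx}\to\E_{f^ny}$ and once more that $\I$ acts as an isometry on conformal structures, it is at most
$$
\dist\big(\tau(f^ny),\,\I_{f^nx\,f^ny}(\tau(f^nx))\big)
\;+\;\dist\big(\tau(f^nx),\,g_n(\tau(f^nx))\big),
\qquad g_n:=\I_{f^nx\,f^ny}^{-1}\circ H^s_{f^nx\,f^ny}.
$$
For $y\in W^s_{loc}(x)$ the distance $\dist(f^nx,f^ny)$ decays exponentially, so $g_n\to\Id$ uniformly on $\M$ by Proposition \ref{close to Id}(iv). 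If moreover $f^{n_j}x$ and $f^{n_j}y$ both lie in a fixed compact set $K\subset\M$ on which $\tau$ is continuous — hence uniformly continuous and bounded — then the first term tends to $0$ by uniform continuity of $\tau$ on $K$, while in the second term $\tau(f^{n_j}x)$ stays in a fixed compact ball of $\c^d$; since the $GL(d,\R)$-action on $\c^d$ is continuous and fixes every structure when the group element is the identity, it is uniformly continuous on that ball, so together with $g_{n_j}\to\Id$ the second term tends to $0$ as well.

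To produce $K$ and the common return times I would fix $\epsilon\in(0,1/2)$, invoke Lusin's theorem for a compact $K$ with $\mu(K)>1-\epsilon>1/2$ and $\tau|_K$ continuous, and use that under the standing assumptions $f$ is ergodic with respect to $\mu$ \cite{BW}. Then by the Birkhoff ergodic theorem the set
$$
G=\Big\{z\in\M:\ \lim_{n\to\infty}\tfrac1n\,\#\{\,0\le i<n:\ f^iz\in K\,\}=\mu(K)\Big\}
$$
has full $\mu$-measure, and for any $x,y\in G$ the forward visit-time sets of $x$ and of $y$ to $K$ each have density $\mu(K)>1/2$, so their intersection has positive density, hence is infinite; enumerating it as $n_1<n_2<\cdots$ gives common return times with $f^{n_j}x,f^{n_j}y\in K$. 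Inserting this sequence into the bound above gives $\tau(y)=H^s_{xy}(\tau(x))$ for all $x,y\in G$ with $y\in W^s_{loc}(x)$, as required.

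The main obstacle, and the only point where the mere measurability of $\tau$ could hurt, is the second distance above: a priori $\tau(f^nx)$ may be an arbitrarily distorted conformal structure, and then $\dist(\tau(f^nx),g_n(\tau(f^nx)))$ need not be small even though $g_n\to\Id$, because the $GL(d,\R)$-action on $\c^d$ is not uniformly continuous over all of $\c^d$. Confining $\tau(f^{n_j}x)$ to a compact ball of $\c^d$ by recurrence to $K$, where $\tau$ is bounded, is exactly what removes this. The rest — matching fibers through $\I$, and the uniformity over the compact base $\M$ of operator norms and of the background metric on $\E$ — is routine bookkeeping.
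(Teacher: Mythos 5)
Your proof is correct and follows essentially the same route as the paper: transport the comparison along the forward orbit using the cocycle property of $H^s$ and the isometry of the $GL$-action on $\c^d$, split through the identification $\I$ by the triangle inequality, pick a Lusin set of measure $>1/2$, and use Birkhoff (with ergodicity) to find infinitely many common return times. The only noticeable difference is cosmetic: where you invoke Proposition~\ref{close to Id}(iv) to conclude $\I_{f^nx\,f^ny}^{-1}\circ H^s_{f^nx\,f^ny}\to\Id$, the paper deduces it purely from continuity of $H^s$ and $\I$ together with $H^s_{xx}=\Id=\I_{xx}$; that weaker ingredient is what keeps Proposition~\ref{tauHinvariant} valid for \emph{any} cocycle with continuous stable holonomies, not just fiber-bunched ones, as the paper remarks at the start of the proof of Theorem~\ref{structure}.
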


\begin{proof}
We denote by $F^n_x (\xi)$ the push forward of a conformal 
structures $\xi$ from $\E (x)$ to $\E(f^nx)$ induced by $F^n_x$,
and similar notations for push forwards by $H^s$ and $\I$.
We also let $x_i=f^i(x)$. Since  $\tau$ is $F$-invariant, 
and $F^n_y$ induces an isometry, we obtain
$$
  \begin{aligned}
&\dist\,(\tau(y), H^s_{xy} (\tau(x))) = 
\dist (F^n_y (\tau(y)), F^n_y H^s_{xy}(\tau(x))) = \\
& = \dist (\tau(y_n), H^s_{x_ny_n} F_x^n(\tau(x)))
 =\dist( \tau(y_n), H^s_{x_ny_n} (\tau(x_n)))\le\\
&\le  \dist( \tau(y_n), \I_{x_{n} y_{n}} (\tau(x_n))) + 
\dist( \I_{x_{n} y_{n}} (\tau(x_n)), H^s_{x_ny_n} (\tau(x_n))). 
 \end{aligned}
 $$

Since  $\tau$ is $\mu$-measurable, 
by Lusin's Theorem there exists a compact set $S\subset \M$ 
with $\mu(S)>1/2$ on which $\tau$ is uniformly continuous and 
hence bounded. 
Let $G$ be the set of points in $\M$ for which the frequency of 
 visiting $S$ equals $\mu(S)>1/2$.
By Birkhoff Ergodic Theorem, $\mu(G)=1$.

Suppose that  both $x$ and $y$ are 
 in $G$. Then there exists a sequence $\{ n_i \}$ such that 
 $x_{n_i}\in S$ and $y_{n_i}\in S$.
Since $y\in W^s_{loc}(x)$, $\,\dist(x_{n_i}, y_{n_i})\to 0$ and hence 
$\dist(\I_{x_{n_i} y_{n_i}}(\tau(x_{n_i})), \tau(y_{n_i}))\to 0$ by uniform continuity of $\tau$ on $S$. 
Since $H^s$ and $\I$ are continuous  and satisfy $H^s_{xx}=\Id=\I_{xx}$,
we have $\|\I_{x_{n_i} y_{n_i}}^{-1} \circ H^s_{x_{n_i}y_{n_i}} - \Id \| \to 0$. 
Since $\tau$ is bounded on $S$, the lemma below yields
$$
\dist( \I_{x_{n_i} y_{n_i}}(\tau(x_{n_i})), H^s_{x_{n_i}y_{n_i}} (\tau(x_{n_i})))=
\dist(\tau(x_{n_i}), \, \I_{x_{n_i} y_{n_i}}^{-1} \circ H^s_{x_{n_i}y_{n_i}} (\tau(x_{n_i}))) \to 0.
$$
 We conclude that $\dist\,(\tau(y), H^s_{xy} (\tau(x))) = 0$ and thus
$\tau$ is essentially $H^s$-invariant.

\begin{lemma}\label{A*} \cite[Lemma 4.5]{KS10}
Let $\sigma$ be a conformal structure on $\R^d$ and $A$ be a linear
transformation of $\,\R^d$ sufficiently close to
the identity. Then 
  $$
     \dist\,(\sigma, A(\sigma)) \le k(\sigma)\cdot\|A-\Id\,\|,
  $$
where $k(\sigma)$ is bounded on compact sets in $\c^d$. 
More precisely, if $\sigma$ is given by a matrix $C$, then
$k(\sigma) \le 3d \, \|C^{-1}\|\cdot \|C\|$
 for any $A$ with $\|A-\Id\,\| \le (6 \|C^{-1}\|\cdot \|C\|)^{-1}$. 

\end{lemma}
\end{proof}

Similarly, $\tau$ is essentially $H^u$-invariant. Since the stable
and unstable holonomies of $F$ are continuous we conclude that 
$\tau$ is essentially uniformly continuous along $W^s$ and $W^u$. 
Since the base system $f$ is center bunched and accessible this 
implies continuity of $\tau$ on $\M$ by \cite[Theorem E]{ASV} or 
\cite[Theorem 4.2]{W}.

$\QED$



\subsection{Proof of Proposition \ref{qc implies conf}}
We use the following proposition from \cite{KS10}.
Recall that a measurable conformal structure $\tau$ on 
$\E$ is called {\em bounded} 
if the distance between $\tau(x)$ and $\tau_0(x)$ is essentially
bounded on $\M$ for a continuous conformal structure $\tau_0$ 
on $\E$.    

\begin{proposition} \cite[Proposition 2.4]{KS10} \label{bounded measurable} 
Let $f$ be a homeomorphism of a compact manifold $\M$ and let
$F : \E \to \E$ be a continuous linear cocycle over $f$. If $F$ is uniformly 
quasiconformal then it  preserves a bounded measurable conformal 
 structure $\tau$ on $\E$.
\end{proposition}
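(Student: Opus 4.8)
The plan is to exploit the nonpositive curvature of the space of conformal structures through a fiberwise circumcenter (Cartan / Bruhat--Tits) construction. Fix an arbitrary continuous conformal structure $\tau_0$ on $\E$, which exists since $\M$ is compact. For $x\in\M$ and $n\in\Z$, push $\tau_0$ forward to the fiber $\c_x$ by setting $\tau_n(x)=F^n_{f^{-n}x}\bigl(\tau_0(f^{-n}x)\bigr)$. Each $\tau_n$ is a continuous section of $\c$, being assembled from the continuous maps $f$, $F$, and $\tau_0$, and the cocycle relation $F_x\circ F^n_{f^{-n}x}=F^{n+1}_{f^{-(n+1)}(fx)}$ yields $F_x(\tau_n(x))=\tau_{n+1}(fx)$ for all $n$.

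Next I would bound, uniformly in $x$, the diameter of the set $Z(x)=\{\tau_n(x):n\in\Z\}\subset\c_x$. Since every $F^n_{f^{-n}x}$ acts isometrically on conformal structures, the triangle inequality in $\c_x$ gives
\[
\dist\bigl(\tau_0(x),\tau_n(x)\bigr)\le \dist\bigl(\Id,F^n_{f^{-n}x}(\Id)\bigr)+\dist\bigl(\Id,\tau_0(f^{-n}x)\bigr)+\dist\bigl(\Id,\tau_0(x)\bigr),
\]
using in each fiber the background metric of Section~\ref{conf structure} (so that $\Id$ denotes the conformal class of that metric). The last two terms are bounded over the compact manifold $\M$ because $\tau_0$ is continuous, while the first is controlled, via the distance formula \eqref{dist(id,C)}, by the logarithm of the quasiconformal distortion of $F^n_{f^{-n}x}$, which is uniformly bounded by the hypothesis of uniform quasiconformality. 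Hence $\dist(\tau_0(x),\tau_n(x))\le C$ for all $x$ and $n$, so $\diam Z(x)\le 2C$.

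Each fiber $\c_x$ is isometric to $SL(d,\R)/SO(d,\R)$, a complete simply connected manifold of nonpositive curvature, so the bounded set $Z(x)$ has a unique circumcenter; I define $\tau(x)$ to be that circumcenter. Because $F_x:\c_x\to\c_{fx}$ is an isometry and, by the relation above, $F_x(Z(x))=\{\tau_m(fx):m\in\Z\}=Z(fx)$, and since the circumcenter is a metric invariant, $F_x(\tau(x))=\tau(fx)$; thus $\tau$ is $F$-invariant. It is bounded since $\tau_0(x)\in Z(x)$, so $\dist(\tau_0(x),\tau(x))$ does not exceed the circumradius of $Z(x)$, which is at most $\diam Z(x)\le 2C$.

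The only point I expect to require care is the measurability of $x\mapsto\tau(x)$, which I would handle variationally. The function $\rho(x,\xi)=\sup_{n\in\Z}\dist(\tau_n(x),\xi)^2$ is, for each fixed $\xi$, a countable supremum of continuous functions of $x$ and hence Borel in $x$; for each fixed $x$ it is proper, continuous, and strictly convex in $\xi$ on the Hadamard fiber $\c_x$ — squared distance to a point is uniformly convex in nonpositive curvature and $Z(x)$ is bounded — and its unique minimizer is precisely the circumcenter $\tau(x)$. The argmin of such a Carath\'eodory function is a Borel map by a standard measurable selection argument, so $\tau$ is $\mu$-measurable, which completes the proof.
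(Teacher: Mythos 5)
The paper does not prove this proposition itself; it imports it from \cite[Proposition 2.4]{KS10}. Your proof is correct and reconstructs the standard argument used there (going back to Sullivan and Tukia for quasiconformal groups): push a continuous reference structure $\tau_0$ along the orbit of $F$, note that the resulting set $Z(x)$ has uniformly bounded diameter in the nonpositively curved fiber $\c_x$ because each $F^n$ acts isometrically and displaces the class of the background metric by an amount controlled through \eqref{dist(id,C)} by $\log K_F$, take the unique Bruhat--Tits/Cartan circumcenter, and use its isometric invariance together with $F_x(Z(x))=Z(fx)$ to get $F$-invariance and boundedness. One small imprecision: in your measurability step $\rho(x,\xi)$ is treated as a Carath\'eodory function on a product, but $\xi$ ranges over the fiber $\c_x$ which varies with $x$, so one should first trivialize $\c$ locally before invoking measurable selection; alternatively, the circumcenter $\tau^{(N)}(x)$ of the finite set $\{\tau_n(x):|n|\le N\}$ is continuous in $x$, and a uniform convexity estimate shows $\tau^{(N)}\to\tau$ pointwise, so $\tau$ is Borel as a pointwise limit of continuous sections.
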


Under our standing assumptions,  Theorem \ref{structure} now
implies that  $\tau$ is  continuous. We can normalize it by a  continuous
function on $\M$ to obtain a Riemannian metric with respect to which
$F$ is conformal.
$\QED$


\subsection{Proof of Theorem \ref{distribution}} 
Let $\E'$ be a measurable $F$-invariant sub-bundle of $\E$
with $\dim \E'_x=d'$.
We consider a fiber bundle $\G$ over $\M$ whose fiber over $x$ 
is the Grassman manifold $\G _x$ of all $d'$-dimensional 
subspaces in $\E_x$. Then $F$ induces the cocycle $\F : \G \to \G$ 
over $f$ with diffeomorphisms $\F_x : \G_x \to \G_{fx}$ depending
continuously on $x$ in smooth topology. 

The stable holonomy $H^s$ for $F$ induces a stable  holonomy 
$\tilde H^s$ for $\F$. Similarly, to the linear case, this is a family of diffeomorphisms 
$\tilde H^s_{xy} : \G _x \to \G _y$ that satisfies properties (ii) and (iii)
Definition \ref{holonomies}
and depends continuously on $x$ and $y \in W^s_{loc}(x)$.
Similarly  $H^u$ induces the  unstable holonomy $\tilde H^u$ for $\F$.

The   sub-bundle  $\E'$ gives rise to a $\mu$-measurable 
$\F$-invariant section $\phi:\M \to \G$. 
We take $m$ to be the lift of $\mu$ to the graph $\Phi$ of $\phi$, i.e.
for a set $X\subset \G$ we define $\;m(X)=\mu(\pi(X\cap \Phi))$, where 
$\pi : \G \to \M$ is the projection. Equivalently, $m$ can be
defined by specifying that  for $\mu$-almost every $x$ in $\M$ the conditional 
measure $m_x$ in the fiber $\G _x$ is the atomic measure at $\phi(x)$.
Since $\mu$ is $f$-invariant and $\Phi$ is $\F$-invariant,
the measure $m$ is $\F$-invariant.

\begin{lemma} \cite[Lemma 4.6]{KS10}\label{neutral_lemma}
There exists $C>0$ such that for any $x \in \M$,  
subspaces $\xi, \eta \in \G_x$, and $n \in \Z$ 
\begin{equation} \label{neutral} 
\dist (\F^n_x (\xi), \F^n_x (\eta)) \le C\cdot K_F(x,n) \cdot \dist (\xi, \eta)
\end{equation}
\end{lemma}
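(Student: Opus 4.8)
The plan is to prove Lemma~\ref{neutral_lemma} by reducing the statement about the Grassmannian cocycle $\F$ to an elementary estimate about how linear maps move $d'$-dimensional subspaces. First I would fix $x\in\M$ and $n\in\Z$ and write $A=F^n_x:\E_x\to\E_{f^nx}$. Choosing the background $\beta$-H\"older Riemannian metrics on $\E_x$ and $\E_{f^nx}$, I regard $\G_x$ and $\G_{f^nx}$ as the Grassmannians of $d'$-planes in these inner-product spaces, with the standard metric $\dist(\xi,\eta)=\|P_\xi-P_\eta\|$ given by the operator norm of the difference of the orthogonal projections (equivalently, the largest principal angle). The map $\F^n_x$ sends a subspace $\xi$ to $A\xi$. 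So the claim becomes: $\dist(A\xi,A\eta)\le C\,\|A\|\,\|A^{-1}\|\,\dist(\xi,\eta)$ for all $\xi,\eta\in\G_x$, with $C$ depending only on $d$ (and the choice of metric on $\G$, which is uniformly comparable across fibers since the background metric on $\E$ is continuous on a compact base).

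The key step is the linear-algebra estimate. I would argue as follows. Let $P_\xi,P_\eta$ be the orthogonal projections onto $\xi,\eta$, and let $Q_{A\xi},Q_{A\eta}$ be the orthogonal projections onto $A\xi,A\eta$. One standard way: write the orthogonal projection onto $A\xi$ explicitly, $Q_{A\xi}=AP_\xi(P_\xi A^* A P_\xi)^{+}P_\xi A^*$ where ${}^+$ denotes the Moore--Penrose inverse of the restriction to $\xi$, and similarly for $\eta$; then estimate $\|Q_{A\xi}-Q_{A\eta}\|$ by expanding this expression and using that each factor $A$, $A^*$ has norm $\le\|A\|$, each inverse factor $(P A^*AP)^+$ has norm $\le\|A^{-1}\|^2$ on the relevant subspace, and that $\|P_\xi-P_\eta\|=\dist(\xi,\eta)$ controls the difference of any two corresponding factors up to bounded combinatorial constants. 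Collecting powers of $\|A\|$ and $\|A^{-1}\|$, the terms of top degree cancel or combine to give a bound of the form $C(d)\,\|A\|\cdot\|A^{-1}\|\cdot\dist(\xi,\eta)=C(d)\,K_F(x,n)\,\dist(\xi,\eta)$. Alternatively, and perhaps more cleanly, one can use the characterization via principal angles: if $\theta$ is the largest principal angle between $\xi$ and $\eta$, pick a unit vector $u\in A\xi$; its angle to $A\eta$ is the angle between $Au'$ ($u'\in\xi$ a unit vector, $\|u'\|\ge\|A\|^{-1}\|u\|$... ) and $A\eta$, and since $\dist(u',\eta)\le\sin\theta$ one gets $\dist(Au',A\eta)\le\|A\|\,\dist(u',\eta)\le\|A\|\sin\theta$, while $\|Au'\|\ge\|A^{-1}\|^{-1}$, so the sine of the angle from $u$ to $A\eta$ is at most $\|A\|\,\|A^{-1}\|\sin\theta$; taking the sup over $u\in A\xi$ and symmetrizing yields the gap bound with $C$ depending only on $d$ through the comparison between the gap metric and $\dist$ on $\G$.

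Finally I would note that $\dist$ on the fibers $\G_x$, defined via the fixed Riemannian metric on $\G$, is uniformly bi-Lipschitz to the gap/projection metric with constants independent of $x$: this follows from compactness of $\M$ and continuity of the background metric on $\E$, so absorbing these constants into $C$ completes the proof. I expect the main obstacle to be purely bookkeeping in the linear-algebra step — keeping track of which inverse factors act on which subspace and verifying that the powers of $\|A^{-1}\|$ do not accumulate beyond the single power claimed (it is $\|A\|\cdot\|A^{-1}\|$, not a higher power, precisely because going from $\xi$ to $A\xi$ uses $A$ once for the image and $A^{-1}$ once to normalize, and the same when going back). The principal-angle approach makes this transparent and avoids the Moore--Penrose algebra, so that is the route I would actually write up; the whole argument is local in the fiber and uses nothing about the dynamics beyond the definition $\F^n_x(\xi)=F^n_x(\xi)$ and the definition \eqref{K_F} of $K_F$.
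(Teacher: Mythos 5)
Your argument is correct and gives a clean self-contained proof. Note that the paper itself does not prove this lemma --- it is imported verbatim from \cite[Lemma~4.6]{KS10} --- so there is no in-paper argument to compare against, but your principal-angle route is a natural one and it is sound. The key point is exactly as you say: for a unit vector $u=Au'/\|Au'\|\in A\xi$ with $u'\in\xi$ unit, one has
$$
\dist(u,A\eta)=\frac{\dist(Au',A\eta)}{\|Au'\|}\le \frac{\|A\|\,\dist(u',\eta)}{\|Au'\|}\le \|A\|\cdot\|A^{-1}\|\,\dist(u',\eta),
$$
and since $\xi,\eta$ have equal dimension the resulting one-sided gap already equals $\|P_{A\xi}-P_{A\eta}\|$, so no genuine symmetrization is needed. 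Passing from the projection/gap metric to whatever fixed metric is used on the Grassmannian fibers by a uniform bi-Lipschitz comparison, legitimate by compactness of $\M$ and continuity of the background Riemannian metric on $\E$, simply absorbs a factor into $C$; your remark to that effect is correct. The one blemish is the garbled parenthetical ``$\|u'\|\ge\|A\|^{-1}\|u\|$'' --- both are unit vectors, so this is vacuous or false --- but you immediately state the correct and relevant bound $\|Au'\|\ge\|A^{-1}\|^{-1}$, so this is a typo rather than a gap. The Moore--Penrose alternative you sketch would also work, but as you observe it is heavier bookkeeping for the same conclusion; the principal-angle version you commit to is the one to keep.
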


The definitions of $K(x,n)$,  $\lambda_+(F,\mu )$, and 
$\lambda_-(F,\mu )$   yield  that for $\mu$ almost all $x$
\begin{equation} \label{K,lambda}
\begin{aligned}
&\lim _{n\to \infty} \frac 1n \log {K(x,n)} =
\lim _{n\to \infty} \frac 1n \log (\|F^n_x\|\cdot \|(F^n_x)^{-1}\|) = \\
&\lim _{n\to \infty} \frac 1n \log \|F^n_x\| -
\lim _{n\to \infty} \frac 1n \log \|(F^n_x)^{-1}\|^{-1} =
\lambda_+(F,\mu )-\lambda_-(F,\mu )=0.
\end{aligned}
\end{equation}
Hence Lemma \ref{neutral_lemma} implies that  Lyapunov
exponent of $\F$ along the fiber is zero $m$ a.e.
This together with existence of the stable and unstable 
 holonomies for $\F$ allows us to apply 
\cite[Theorem C]{ASV}
to the measure $m$ and conclude that there exists a system of 
conditional measures $\tilde m_x$ on $\G_x$ for $m$ which are 
holonomy invariant and depend continuously on $x\in \M$
in the weak$^*$ topology. 

Since the conditional measures $m_x$ and 
$\tilde m_x$ coincide for all $x$ in  a set $X\subset \M$ 
of full $\mu$ measure, we see that
$\tilde m_x = m_x$ is the atomic measure at $ \phi(x)$ for all $x\in X$. 
Since $X$ is dense we obtain that $\tilde m_x$ is atomic for all $x\in \M$. 
Indeed,  for any $x\in \M$  we can take a sequence $X \ni x_i  \to x$  
and assume by compactness of $\G$ that $\phi (x_i)$ converge 
to some $\xi \in \G_x$. This implies that $\tilde m_{x_i}=m_{x_i}$ converge to 
the atomic measure at $\xi$, which therefore coincides with $\tilde m_x$ 
by continuity of the family $\{\tilde m_x\}$. 
Denoting $\tilde \phi(x) = \text{supp } \tilde m_x$ for $x\in \M$, 
we obtain a continuous section $\tilde \phi$ which  coincides with 
$\phi$ on $X$. 
This shows that $\E'$ coincides $\mu$-almost everywhere with a  
continuous sub-bundle
which is invariant under the stable and unstable holonomies. 
$\QED$



\subsection{Proof of Theorem \ref{reduction}}

We use the following particular case of Zimmer's Amenable Reduction Theorem:
\vskip.2cm

 \noindent 
 \cite[Corollary 1.8]{HKt},  \cite[Theorem  3.5.9]{BP}
 {\it Let  $f$ be an ergodic transformation of a measure space 
 $(X,\mu)$ and let $F: X \to GL(d,\R)$ be a measurable function.
 Then there exists a measurable function $C :X \to GL(d,\R)$ 
 such that the function $A(x) = C^{-1}(fx) F(x) C(x)$ takes
 values in an amenable subgroup of $GL(d,\R)$.}

\vskip.2cm

There are $2^{d-1}$ standard maximal amenable subgroups of $GL(d,\R)$.
They correspond to the distinct compositions of $d$, $\, d_1+\dots + d_k=d$,
and each group consists of all block-triangular matrices of the form
\begin{equation} \label{blockform}
 \left[ \begin{array}{cccc}
 A_1 & \ast & \ldots & \ast \\
 0  & A_2 & \ddots &\vdots \\
 \vdots & \ddots & \ddots & \ast \\
 0 & \ldots & 0 & A_k
 \end{array} \right]
\end{equation}
 where each  diagonal block $A_i$ is a scalar
 multiple of a $d_i\times d_i$ orthogonal matrix.   Any amenable
 subgroup of $GL(d,\R)$ has a finite index subgroup which is
 contained in a conjugate of one of these standard subgroups 
 \cite[Theorem 3.4]{Moore}. The normalizer of the diagonal 
subgroup is an example of an amenable group which does not lie 
in any such conjugate. It is the finite extension of the diagonal 
subgroup that contains all permutations of the coordinate axes.

Since $\E$ can be trivialized on a set of full $\mu$-measure
\cite[Proposition 2.1.2]{BP},  we can measurably identify $\E$ with 
$\M \times \rd$ and  view $F$ as a function $\M \to GL(d,\R)$. 
Thus can we apply the Amenable 
Reduction Theorem to $F$ and obtain a measurable coordinate change 
function $C :\M \to GL(d,\R)$ such that $A(x) = C^{-1}(fx) F(x) \,C(x) \in G$ 
for $\mu$-a.e. $x\in \M$, where $G$ is an amenable subgroup of 
$GL(d,\R)$. By the above we may assume that $G$ contains a 
finite index subgroup $G_0$ which is contained in one of the $2^{d-1}$
standard maximal amenable subgroups. 

{\bf Case 1.} First we consider the case when $G$ itself is contained in a
standard subgroup. Then the conclusion of the theorem
holds for $F$ itself rather than for a power of its lift.
The sub-bundle $V^i$ spanned by the first $d_1+\dots + d_i$
coordinate vectors in $\R^d$ is $A$-invariant for $i=1, \dots, k$.
Denoting $\E^i_x=C(x) V^i$ we obtain  the corresponding  
flag of measurable $F$-invariant  sub-bundles 
$$
\E^1\subset \E^2 \subset \dots \subset \E^k=\E \quad\text{with }\;
\dim \E^i=d_1+\dots + d_i.
$$
By Theorem \ref{distribution} we may assume that the sub-bundles $\E^i$ are continuous.
Since $A_1(x)$ is a scalar multiple of a $d_1\times d_1$ orthogonal matrix
for $\mu$-a.e. $x$, we conclude that the restriction of $F$ to $\E^1$ 
is conformal with respect to the push forward by $C$ of the standard 
conformal structure on $V^1$. This gives a measurable $F$-invariant 
conformal structure $\tau_1$ on $\E^1$. Since $F$ preserves $\E^1$, 
so does the stable holonomy $H^s$. Hence $H^s$ induces 
a stable holonomy $H^{s,1}$ for the restriction of $F$ to $\E^1$. 
By Proposition \ref{tauHinvariant} the conformal structure $\tau_1$
is essentially
invariant under $H^{s,1}$. Similarly we obtain essential invariance 
of $\tau_1$ under the unstable holonomy $H^{u,1}$. This yields 
continuity of $\tau_1$ on $\M$ as in the end of the proof of 
Theorem \ref{structure}. 

Similarly, we can consider continuous factor-bundle $\E^i/\E^{i-1}$
over $\M$ with the natural induced cocycle $F^{(i)}$. Since the
matrix of the map induced by $A$ on $V^i/V^{i-1}=\R^{d_i}$ is $A_i$,
it preserves the standard conformal structure on $\R^{d_i}$. Pushing 
forward by $C$ we obtain a measurable conformal structure 
$\tau_i$ on $\E^i/\E^{i-1}$ invariant under $F^{(i)}$. The holonomies 
$H^s$ and $H^u$ 
induce continuous holonomies for $F^{(i)}$ on $\E^i/\E^{i-1}$. 
As above, we conclude that $\tau_i$ is essentially invariant under
these holonomies and continuous. 

{\bf Case 2.} Now we consider the case when only a finite
 index subgroup $G_0$ of $G$ is contained in a standard maximal
amenable  subgroup.  We again consider the flag of subspaces 
$V^i$ spanned by the first $n_i=d_1+\dots + d_i$ coordinate vectors 
in $\R^d$, $i=1, \dots, k$. Let $G_*$ be the stabilizer of this flag in $G$.
Then $G_*$ contains $G_0$ and thus $G_*$ has a finite finite index $l$
in $G$. The orbit of this flag under $G$ consists of $l$ 
distinct flags in $\R^d$ which we denote by 
\begin{equation} \label{flagsW^j}
W^j= \{ V^{j,1} \subset ... \subset V^{j,k-1} \subset V^{j,k}=\R^d \}, \qquad j=1,...,l.
\end{equation}
Any $g\in G$ permutes these flags and preserves their union.
First we will construct  
corresponding flags of continuous invariant sub-bundles. If $l \ge 2$ this
requires in general to pass to a finite cover and a power of $F$.
After that we will show existence of continuous invariant conformal 
structures on the factor bundles for each flag. 

For each $i=1, ..., k-1$ the subspaces $V^{j,i}$, $j=1,...,l$, have 
dimension $n_i$. Some of them may coincide, so we denote the
number of distinct ones by $l_i$. We also denote their union and its 
image under $C$ by 

\begin{equation} \label{UhatU}
U^{(i)}= V^{1,i} \cup ... \cup V^{l,i} \subset \R^d \quad \text{and} 
\quad \hat \U^{(i)} _x=C(x) U^{(i)} \subset \E_x \quad \text{for } \mu\text{-a.e. } x
\end{equation}
Then $\hat \U^{(i)}$ depends measurably on $x$ and is a union of 
${l_i}$ distinct $n_i$-dimensional subspaces of $\E_x$ . Since $g(U^{(i)})=U^{(i)}$ for any 
$g\in G$, we see that $\hat \U^{(i)} _x$ is invariant under $F$, i.e.  
$F_x (\hat \U^{(i)} _x) = \hat \U^{(i)} _{fx}$. 
First we claim that for each $x$ in $\M$ there exists $\, \U^{(i)} _x$, a union 
of ${l_i}$ distinct $n_i$-dimensional subspaces of $\E_x$,  
which depends continuously on $x$, coincides with
$\hat \U^{(i)} _x$ for $\mu$-a.e. $x$, and is invariant under $F$.
This can be seen as in the proof of Theorem \ref{distribution}. Indeed,
we can define the measure $m$ on the corresponding Grassmannian 
bundle $\G$ by choosing the conditional measure $\hat m_x$ on $\G_x$ to 
be the atomic measure equidistributed on the ${l_i}$ points 
corresponding to $\hat \U^{(i)} _x$. 
Then $m$ is invariant under the induced cocycle  on $\G$.
Thus by the same argument we obtain that there exists a system of 
conditional measures $ m_x$ on $\G_x$ for $m$ which are 
holonomy invariant and depend continuously on $x\in M$
in the weak$^*$ topology. As before, the measures $ m_x$ are
atomic for all $x$. Moreover, the number of atoms is preserved by 
the stable and unstable holonomies since they are induced by 
linear isomorphisms. Hence accessibility implies that the number 
of atoms is ${l_i}$ for all $x$ in $\M$. Then $m_x$ corresponds to
a union $\U^{(i)} _x$ of $l_i$ distinct $n_i$-dimensional subspaces 
in $\E _x$ which depends continuously 
on $x$ and is invariant under $F$. We also denote 
 $\U^{(i)} = \bigcup _{x \in \M} \U^{(i)} _x \subset \E.$

We fix a point $q \in \M$ and denote the subspaces in $ \U^{(i)} _q$
by $\U^{1,i}_q , ... , \,\U^{l_i,i}_q$. Locally they can be uniquely extended 
to continuous sub-bundles $\U^{1,i}, ... , \,\U^{l_i,i}$ so that the union
of $\U^{1,i}_x , ... , \,\U^{l_i,i}_x$ is $\U^{(i)} _x$. Similarly, they 
can be extended uniquely along any curve. The extension along 
 a loop based at $q$ produces a permutation of subspaces 
 $\U^{1,i}_q , ... , \,\U^{l_i,i}_q$, which depends only on the homotopy 
 class of the loop. Thus we obtain a natural  homomorphism 
 $\rho_i :\pi_1(\M, q) \to \Sigma({l_i})$ from the fundamental  group of 
 $\M$ to the  group of permutations of $l_i$ symbols. If the homomorphism 
 $\rho_i$ is trivial, then $\U^{1,i}_q , ... , \,\U^{l_i,i}_q $ extend globally
to continuous sub-bundles $\U^{1,i}, ... , \,\U^{l_i,i}$ over $\M$. 
 
We consider the homomorphisms $\rho_i$ for each $i=1, ..., k-1$ and 
denote their direct product by 
$\rho :\pi_1(\M, q) \to \Sigma({l_1}) \times ... \times \Sigma({l_{k-1}})$. 
If $\rho$ is non-trivial
we pass to a finite cover as follows. The kernel $H_q=\ker \rho$ is a 
normal subgroup of finite index in $\pi_1(\M,q)$.  Hence there exists 
a finite cover 
$p: \tilde \M \to \M$ such that $p_* (\pi_1(\tilde \M, \tilde q))=H_q$ 
for any $\tilde q \in p^{-1}(q)$. By taking the pullback under $p$ we obtain
the bundle $\tilde \E$ over $\tilde \M$ and for each $i=1, ..., k-1$ the corresponding  union of subspaces $\tilde \U^{(i)} _{\tilde x}$ in $\tilde \E _{\tilde x}$. 
Fix some $\tilde q \in p^{-1}(q)$.
We claim that for each $i$ the subspaces in $\tilde \U^{(i)}_{\tilde q} $ 
extend to continuous $n_i$-dimensional sub-bundles 
$\tilde \U^{1,i}, ... , \,\tilde \U^{l_i,i}$ of $\tilde  \E$ so that 
$\tilde \U^{1,i}_{\tilde x} \cup ... \cup \,\tilde \U^{l_i,i}_{\tilde x}=\tilde \U^{(i)} _{\tilde x}$ 
for all $\tilde x \in \tilde \M$. It suffices to check that any loop 
$\tilde \gamma \in \pi_1(\tilde \M, \tilde q)$ induces trivial permutations.
Indeed, $p\circ \tilde \gamma \in H_q$ by the construction of $\tilde \M$, 
and $\tilde \gamma$ and $p \circ \tilde \gamma$ induce the same 
permutations since the extension  along $\tilde \gamma$ projects to 
that along $p \circ \tilde \gamma$. 

Now we lift $f$ to the cover $\tilde \M$. Fix any $\tilde q \in p^{-1}(q)$ and choose any $\tilde x  \in p^{-1}(f(q))$. 
$$
\begin{array}{ccccccc}
\tilde \M, \,\tilde q &\overset{\text{\small{$\tilde f$}}}{\longrightarrow} & \tilde \M,\, \tilde x &\qquad\qquad&
\pi_1(\tilde \M, \,\tilde q) & & 
\pi_1(\tilde \M, \tilde x )
\\
\text{\small{$p$}}\downarrow & &\text{\small{$p$}}\downarrow &&
\text{\small{$p_*$}}\downarrow & &\text{\small{$p_*$}}\downarrow 
 \\
\M, \, q & \overset{\text{\small{$f$}}}{\longrightarrow}&  \M,\, f(q) &&
\pi_1(\M, q) & \overset{\text{\small{$f_*$}}}{\longrightarrow}&  \pi_1(\M, f(q))
\end{array}
$$
A necessary and sufficient condition for existence of 
$\tilde f : \tilde \M \to \tilde\M$ satisfying $\tilde f (\tilde q) = \tilde x$ 
and covering $f \circ p$ is the inclusion 
$(f \circ p)_*  (\pi_1(\tilde \M, \tilde q)) \subset p_* (\pi_1(\tilde \M, \tilde x))$. 
We will show the equality. 
We recall that $p_*  (\pi_1(\tilde \M, \tilde q)) = H_q$ by the construction.
It follows that $p_*  (\pi_1(\tilde \M, \tilde x))$ is the subgroup $H_{f(q)}$ of 
$\pi_1(\M,f(q))$ that consists of all loops inducing  trivial permutations
of the subspaces at $f(q)$. Indeed, for any natural isomorphism 
$i_s : \pi_1(\M,f(q)) \to \pi_1(\M,q)$ given by a path $s$ from $q$ to $f(q)$
it is easily seen that $i_s (H_{f(q)})=H_q$, and for any cover
$i_s (p_*  (\pi_1(\tilde \M, \tilde x)))=p_*  (\pi_1(\tilde \M, \tilde q))$ 
since the latter
is normal. Finally, $f_*(H_q) = H_{f(q)}$ since for any loop 
$\gamma \in \pi_1(\M,q)$ the cocycle $F$ gives a homeomorphism
between the restrictions of $\E$ to $\gamma$ and to $f \circ \gamma$, 
which maps $\U^{(i)}$ to $\U^{(i)}$ and hence preserves the type of the induced permutation. We conclude that $(f \circ p)_*  (\pi_1(\tilde \M, \tilde q)) =
 p_* (\pi_1(\tilde \M, \tilde x))$ and thus the lift $\tilde f$  exists. 
\vskip.1cm
 
We note that the manifold $\tilde \M$ is compact and connected, and 
the lift $\tilde f$ 
satisfies our standing assumptions. Indeed, since the projection $p$ is 
a local diffeomorphism, the invariant volume $\mu$ lifts to an invariant 
volume $\tilde \mu$, and the partially hyperbolic splitting for $f$ lifts to the one 
for $\tilde f$. Moreover, $\tilde f$ satisfies the same bunching and its stable/unstable foliations project to those of $f$. It follows that $\tilde f$ 
is accessible. Indeed,
by compactness and connectedness, it suffices to show that
any point $\tilde q \in \tilde \M$ has a neighborhood whose any point 
$\tilde y$  can be connected 
to $\tilde q$ by an $su$-path. Let $q=p(\tilde q)$ and $y=p(\tilde y)$.
By \cite[Lemma 4.4]{W}  $q$ can be connected to any sufficiently close
$y$ by an $su$-path arbitrarily close to a certain contractible $su$-path 
from $q$ to $q$. The lift of such a path
to $\tilde \M$ gives an $su$-paths connecting  $\tilde q$ and $\tilde y$.
 
We  denote by $\tilde F : \tilde  \E \to \tilde  \E$ the unique lift of the cocycle 
$F$ to the cocycle over $\tilde f$. We note that $\tilde  \E$ and $\tilde F$ 
are $\beta$-H\"older, and $\tilde F$ is fiber-bunched. We also lift the matrix
functions $A$ and $C$ to $\tilde \M$, $\tilde A(\tilde x) = A(p(\tilde x))$ and 
$\tilde C(\tilde x) = C(p(\tilde x))$, and note that 
$\tilde A(\tilde x) = \tilde C^{-1}(\tilde f(\tilde x)) \tilde F(\tilde x) \,\tilde C(\tilde x)$ for 
$\tilde \mu$-a.e. $\tilde x$, where $\tilde \mu$ is the lift of $\mu$.

For each $i$ the cocycle $\tilde F$ preserves the union 
$\tilde \U^{(i)}$ of sub-bundles $\tilde \U^{1,i}, ... , \,\tilde \U^{l_i,i}$. 
Since the sub-bundles are continuous, the permutation of their order 
induced by $\tilde F_{\tilde x}$ is continuous in $\tilde x$ and hence
 is constant on $\tilde \M$. 
Hence there exists $N$ such that the cocycle $\hat F = \tilde F^{N}$
preserves every sub-bundle $\tilde \U^{j,i}$, $i=1, ..., k-1$, $j=1, ..., l_i$. Moreover, these sub-bundles  can be arranged into flags 
$\tilde \W^1, ..., \tilde \W^l$ which are mapped by $\tilde C^{-1}(\tilde x)$ 
to the flags $W^1, ..., W^l$ in $\R^d$ for $\tilde \mu$-a.e. $\tilde x$ up to 
a permutation which depends on $\tilde x$. ({We can not expect this 
permutation to be constant a.e. since 
modifying $C$ on a set of positive measure by an element of $G$
gives a different version of $A$ satisfying the conclusion of the
Amenable Reduction Theorem.})
 Indeed, for each $i$ and for $\tilde \mu$-a.e. $\tilde x$ the 
isomorphism $\tilde C(\tilde x)$ maps $U^{(i)}$ to $\tilde \U^{(i)} _{\tilde x}$ 
and thus marks the inclusions between subspaces 
$\tilde \U^{1,i-1}_{\tilde x}, ... , \,\tilde \U^{l_{i-1},i-1}_{\tilde x}$ and 
$\tilde \U^{1,i}_{\tilde x}, ... , \,\tilde \U^{l_i,i}_{\tilde x}$ that correspond to the
inclusions in the flags $W^1, ..., W^l$. We can view these inclusions
as a measurable function $\psi_i$ from $\tilde \M$ to the set of binary 
relations between the sets $\{1, ..., l_{i-1} \}$ and $\{1, ..., l_{i} \}$. 
We note that the cocycle $\hat F$ is conjugate by $\tilde C$ to the 
matrix cocycle $\hat A= \tilde A^{N}$ with values in the same group $G$. 
All elements of $G$ permute the flags $W^1, ..., W^l$ and thus preserve
the corresponding binary relations. Since $\hat F$ also preserves 
all sub-bundles $\tilde \U^{j,i}$, we conclude $\psi_i$ is invariant under
$\tilde f^N$ and hence is constant $\tilde \mu$-a.e. by ergodicity.
The inclusions 
given by the functions $\psi_i$ arrange the sub-bundles $\tilde \U^{j,i}$
into desired flags $\tilde \W^1, ..., \tilde \W^l$.
We fix one these flags and denote it 
$$
\tilde \W= \left\{ \{ 0\}=\tilde  \E^0\subset \tilde \E^1 \subset 
\dots \subset \tilde \E^k=\tilde \E \right\}.
$$
It remains to show that each factor bundle $\tilde \E^i / \tilde \E^{i-1}$, $i=1,...,k$, 
has a continuous conformal structure invariant under the factor cocycle
induced by $\hat F$.

Recall that any matrix $A$ in $G_0$ has the block-triangular form 
\eqref{blockform} and hence preserves the subspaces $V^{i}$ and $V^{i-1}$.
Its factor map on $V^{i}/V^{i-1}= \R^{d_i}$ is given by the block $A_i$ and
thus preserves the standard conformal structure $\sigma$ on $\R^{d_i}$. 
The orbit of $\sigma$ under the flag stabilizer $G_*$ is a finite set $\O$ 
in the space of conformal structures on $\R^{d_i}$. Since $\O$ is invariant
under $G_*$, so is the smallest ball containing $\O$, which is unique 
in the space of nonpositive curvature. The center of this ball is a conformal 
structure $\sigma_*$ on $V^{i}/V^{i-1}= \R^{d_i}$ preserved by the factor 
action of any matrix in $G_*$.  Pushing $\sigma_*$ by the action 
 of $G$ we obtain conformal structures $\sigma_j$ on the factors  
 $V^{j,i}/V^{j,i-1}$ of the flags $W^j$, $j=1,..., l$. It follows that if 
 $g \in G$ maps $W^j$ to $W^{j'}$ then its factor map 
 $\bar g : V^{j,i}/V^{j,i-1} \to V^{j',i}/V^{j',i-1}$ takes $\sigma_j$ to  $\sigma_{j'}$. By the construction of the flag $\tilde \W$, for $\tilde \mu$-a.e. $\tilde x$ 
 there is $j=j(x)$ 
such that $\tilde \E^i _{\tilde x}= \tilde C(\tilde x) V^{j,i}$ and $\tilde \E^{i-1} _{\tilde x}= \tilde C(\tilde x) V^{j,i-1}$.
Pushing $\sigma_j$ by $\tilde  C (\tilde x)$ 
 we obtain a measurable conformal structure $\tau$ on $\tilde \E^i / \tilde \E^{i-1}$. 
Since $\hat F$ is conjugate by $\tilde C$ to $\hat A$, which takes values 
in $G$, we conclude that $\tau$ is invariant under the factor of $\hat F$ 
on $\tilde \E^i / \tilde \E^{i-1}$. The stable and unstable holonomies for the fiber-bunched
cocycle $\tilde F$ give the continuous holonomies for $\hat F$ and 
for its factor on  $\tilde \E^i /\tilde  \E^{i-1}$. Using  
Proposition \ref{tauHinvariant} and Theorem \ref{structure} as before 
we conclude  that  $\tau$ coincides $\tilde \mu$-a.e. with a 
continuous conformal structure on $\tilde \E^i / \tilde \E^{i-1}$
invariant under the factor cocycle of $\hat F$.
 $\QED$

\begin{remark} \label{triv} {\em
Suppose that there exist $d$ continuous vector fields such that 
$\tilde \E^i$ is spanned by the first $d_1+\dots + d_i$ of them. Then 
the theorem implies that $\hat F$ is continuously cohomologous to a cocycle 
with values in a standard maximal amenable subgroup of $GL(d,\R)$ 
given by \eqref{blockform}. Indeed, since $\tilde \E \approx \M \times \R^d$ 
we can view the cocycle as a function $\hat F: \M \to GL(d,\R)$, moreover, 
the vector fields give a continuous coordinate change $\bar  C :\M \to GL(d,\R)$
 such that $\bar  A(x) = \bar  C^{-1}(fx) \hat F(x) \,\bar  C(x)$ has a block 
 triangular form. Each diagonal block $\bar A_i$ corresponds to the factor cocycle on $\E^i/\E^{i-1}$ and thus  preserves a continuous conformal 
 structure $\tau_i$ on $\R^{d_i}$, i.e. $\bar A_i(x)(\tau_i (x))=\tau_i (\tilde fx)$. 
To make the diagonal blocks conformal we change the coordinates in
$\R^{d_i}$  by the unique positive square root of the matrix of $\tau_i (x)$. 

}
\end{remark}

\subsection{Example} \label{example}
We give an example of an analytic cocycle $F$ on $\E=\T^2 \times \R^2$
over an Anosov automorphism $f$ of $\T^2=\R^2 / \Z^2$ so that $F$
is fiber bunched and has only one Lyapunov exponent with respect to the 
Haar measure $\mu$, but has no invariant $\mu$-measurable sub-bundles 
or conformal structures. 

We view cocycles as $GL(2,\R)$-valued functions. We construct $F$ using its 4-cover
$\bar F$ on $\bar \T^2=\R^2 / (4\Z \times \Z)$, which is conjugate to a 
diagonal cocycle $\bar A$. 
We define 
$$
   \bar A(x)=  \left[ \begin{array}{cc}  a(x) & 0 \\  0 & b(x)\end{array} \right],
        \; \text{ where }\;  a(x)=1+\epsilon \cos (\pi x_1), \;\;b(x)=1-\epsilon \cos (\pi x_1);
$$      
$$   \bar C(x)=    \left[ \begin{array}{cc} 
     \cos (\frac{\pi}{2} x_1) & -\sin (\frac{\pi}{2} x_1)  \\ 
      \sin (\frac{\pi}{2} x_1)  &   \;\;\; \cos (\frac{\pi}{2} x_1) \end{array} \right]=
      R \left(\frac{\pi}{2} x_1 \right), 
      \;\;\text{ the rotation by } \;\frac{\pi}{2} x_1.
$$
Both $\bar C$ and $\bar A$ are well-defined and analytic on $\bar \T^2$.
We choose a hyperbolic  matrix in $SL(2,\Z)\,$ congruent to the identity 
modulo 4,
e.g. {\tiny $\left[ \begin{array}{cc}  41 & 32 \\  32 & 25 \end{array} \right]$},
and let $f$ and $\bar f$ be the induced automorphisms of $\T^2$ and 
$\bar \T^2$, respectively. Then we define the function
\begin{equation}\label{F}
   \bar F(x)=\bar C(\bar fx)\, \bar A(x)\, \bar C(x)^{-1} 
   =\bar C(\bar fx)\, \bar C(x)^{-1} \cdot \, \bar C(x) \, \bar A(x)\, \bar C(x)^{-1}. 
 \end{equation}
The term $\bar C(\bar fx)\, \bar C(x)^{-1}$ is the rotation 
$R \left(\frac{\pi}{2} ((\bar f x)_1 -x_1 ) \right)$ and hence is 
1-periodic in both $x_1$ and $x_2$ by the assumption on $\bar f$. 
A direct calculation shows that 
$$
   \bar C(x) \, \bar A(x)\, \bar C(x)^{-1}=    \frac12 \left[ \begin{array}{cc}  
   2+\e(1+\cos(2\pi x_1))&\e\sin (2\pi x_1)
   \\ \e\sin (2\pi x_1) & 2-\e(1+\cos(2\pi x_1)) \end{array} \right].
 $$
Therefore $\bar F$ is 1-periodic in both 
$x_1$ and $x_2$, and  it projects to an 
analytic function $F$ on $\T^2$.
For small $\epsilon$, $\bar F$ and 
$F$ are close to orthogonal, hence  are fiber bunched. 

Since $a(0)\ne b(0)$, the functions $a(x)$ and $b(x)$ are 
not cohomologous, and hence the coordinate line bundles 
$\E_1$ and $\E_2$ are the only invariant sub-bundles for $\bar A$ 
measurable with respect to $\bar \mu$ \cite[Lemma 7.1]{S11}. 
It is easy to see that  $\bar A$ has  one Lyapunov exponent
$$
\lambda=\lim_{n\to \infty}\frac{\log(a(x)\dots a(f^{n-1}x))}{n}=
\int_{\bar \T^2} \log a(x)\, d\bar \mu = \int_{\bar \T^2} \log b(x)\, d\bar \mu
\quad \text{for }\bar \mu \text{-a.e. }  x.
$$
Since $\bar F$ is conjugate by $\bar C$ to $\bar A$, it also has one 
Lyapunov exponent with respect to $\bar \mu$, and hence so does $F$.
However, $F$ has two exponents at $0=f(0)$, $\, \log (1+\e)$ 
and  $\log (1-\e)$, so it cannot preserve 
a conformal structure. Also, $\bar F$ preserves exactly two sub-bundles 
$\bar U^i =\bar C \, \E_i$. Their projections to $\T^2$ are not sub-bundles, 
as they ``twist together into a single object". 
(On the intermediate cover $\tilde \T^2=\R^2 / (2\Z \times \Z)$ this object 
splits into two non-orientable invariant sub-bundles, illustrating the lift
in the proof of Theorem \ref{reduction}.) We conclude that $F$ has no invariant 
sub-bundle, since lifting one to $\bar T^2$ would give an invariant  sub-bundle
for $\bar F$ different from $\bar U^1$ and $\bar U^2$. 


\subsection{Subadditive sequences of functions}

Proposition \ref{<0} plays a key role in the proof of 
Corollary \ref{reductioncor}. It removes extra assumptions from
 \cite[Proposition 3.5]{RH}, which was similar to a result in 
 \cite{Sch}, but proved to be more useful for many applications. 

Let $f$ be a homeomorphism of a compact metric space $X$.
A sequence of continuous functions $a_n : X \to \R$ is called
{\em subadditive}\, if 
\begin{equation} \label{subad}
  a_{n+k}(x) \le a_k(x) +  a_n(f^k x) 
\quad\text{for all }x\in X \text{ and  }n,k\in \N.
\end{equation}
For any Borel probability measure $\mu$ on $X$ we denote
$a_n(\mu)= \int_X a_n d\mu$. If $\mu$ if $f$-invariant, \eqref{subad}
implies that $a_{n+k}(\mu) \le a_n(\mu) + a_k(\mu)$, i.e. the sequence 
of real numbers $\{ a_n (\mu) \}$ is subadditive. It is well known that
for such a sequence the following limit exists:
$$
 \chi (\mu) :=\lim_{n\to \infty} \frac {a_n (\mu)}n 
 = \inf_{n\in \N} \frac {a_n (\mu)}n.
$$
Also, by the Subaddititive Ergodic Theorem, if $\mu$ is ergodic 
then 
\begin{equation} \label{chi}
 \lim_{n\to \infty} \frac {a_n (x)}n= \chi (\mu) \quad
  \text{for $\mu$-almost all } x\in X.
\end{equation}

\begin{proposition}\label{<0} 
Let $f$ be a homeomorphism of a compact metric space $X$
and  $a_n : X \to \R$  be subadditive sequence of continuous functions.
If $\chi (\mu)<0$  for every ergodic invariant Borel probability measure 
$\mu$ for $f$, then there exists $N$ such that $a_N(x) <0$ for all $x \in X$.
\end{proposition}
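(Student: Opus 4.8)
The plan is to argue by contradiction, using compactness of the space of $f$-invariant probability measures. Suppose the conclusion fails, so that for every $N\in\N$ there is a point $x_N\in X$ with $a_N(x_N)\ge 0$. Form the empirical measures $\mu_N=\tfrac1N\sum_{k=0}^{N-1}\delta_{f^kx_N}$. By weak-$*$ compactness of the set of Borel probability measures on the compact space $X$, pass to a subsequence $N_j\to\infty$ along which $\mu_{N_j}\to\mu$ weak-$*$; the standard Krylov--Bogolyubov computation (the $\mu_N$ are ``almost invariant'' in the sense that $|\int g\,d\mu_N-\int g\circ f\,d\mu_N|\le 2\|g\|_\infty/N$) shows $\mu$ is $f$-invariant. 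The goal is to prove $\chi(\mu)\ge 0$, contradicting the hypothesis.

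The heart of the matter is a uniform consequence of \eqref{subad}. Fix $m\in\N$. Partition $\{0,1,\dots,N-1\}$ into the $m$ arithmetic progressions of common difference $m$, and apply subadditivity repeatedly along each progression starting from $f^jx$, $j=0,\dots,m-1$. Since $X$ is compact and each $a_n$ is continuous, the quantities $\sup_X|a_n|$ are finite; bounding the finitely many ``edge'' and ``remainder'' terms produced by the $m$ progressions by these suprema, one obtains a constant $C_m$, depending on $m$ and the sequence but not on $N$ or $x$, with
\[
 m\,a_N(x)\ \le\ \sum_{k=0}^{N-1}a_m(f^kx)\ +\ C_m\ =\ N\!\int_X a_m\,d\mu_N^{\,x}\ +\ C_m,\qquad \mu_N^{\,x}:=\tfrac1N\textstyle\sum_{k=0}^{N-1}\delta_{f^kx}.
\]
Taking $x=x_{N_j}$, dividing by $N_j$, and using $a_{N_j}(x_{N_j})\ge 0$ gives $\int_X a_m\,d\mu_{N_j}\ge -C_m/N_j$; letting $j\to\infty$ and using continuity of $a_m$ together with $\mu_{N_j}\to\mu$ yields $a_m(\mu)=\int_X a_m\,d\mu\ge 0$. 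Since $m$ was arbitrary, $\chi(\mu)=\inf_m m^{-1}a_m(\mu)\ge 0$.

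Finally I must contradict the hypothesis, which is phrased for \emph{ergodic} measures. This is a routine extension: by the ergodic decomposition $\mu=\int\mu_\omega\,d\omega$ one has $a_n(\mu)=\int a_n(\mu_\omega)\,d\omega$ for all $n$, and passing to the limit in $n$ (Fekete's lemma for each numerical subadditive sequence $\{a_n(\mu_\omega)\}$, with the integrable upper bound $n^{-1}a_n(\mu_\omega)\le a_1(\mu_\omega)$ and Fatou's lemma for the reverse inequality) gives $\chi(\mu)=\int\chi(\mu_\omega)\,d\omega$; equivalently one may invoke the subadditive ergodic theorem for $\mu$ and identify $\lim_n n^{-1}a_n$ with the ergodic-component exponents. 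Since $\chi(\mu_\omega)<0$ for a.e.\ $\omega$ by assumption, this identity forces $\chi(\mu)<0$, contradicting $\chi(\mu)\ge 0$. Hence no such sequence $\{x_N\}$ exists, so $a_N<0$ on all of $X$ for some $N$.

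I expect the only genuinely delicate point to be the uniform subadditivity estimate: the telescoping along each of the $m$ progressions must be organized so that the number and sizes of the leftover terms are bounded independently of $N$ and $x$, which is precisely what makes $C_m$ uniform and lets the argument close upon passing to the weak-$*$ limit. Everything else --- invariance of $\mu$, convergence of integrals of continuous functions, and the ergodic-decomposition remark --- is standard.
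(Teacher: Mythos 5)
Your proof is correct, but it is organized differently from the paper's. The paper argues constructively in two compactness steps: it first exploits weak-$*$ compactness of the set $\m$ of $f$-invariant measures (via a finite subcover) to produce a single $K$ with $a_K(\mu)<c<0$ for \emph{all} $\mu\in\m$; then Lemma \ref{psi} uses empirical measures to upgrade this to a uniform bound on the Birkhoff sums $\sum_{i<n}a_K(f^ix)$; finally it telescopes $a_N$ into blocks of $a_K$ (with $N=n+K$, $n=Km$) to get $a_N<0$. You instead run everything as a single compactness-based contradiction: you take empirical measures $\mu_{N}^{x_N}$ at putative witnesses of $a_N(x_N)\ge 0$, extract a weak-$*$ limit $\mu\in\m$, and run the telescoping in the \emph{reverse} direction, from $a_N$ down to averages of $a_m$, to get $a_m(\mu)\ge 0$ for every $m$ and hence $\chi(\mu)\ge 0$; this contradicts $\chi(\mu)<0$, which both you and the paper deduce from the ergodic hypothesis via the ergodic decomposition. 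The underlying ingredients — telescoping along arithmetic progressions, weak-$*$ compactness, ergodic decomposition — are the same; your arrangement is more economical (one compactness argument instead of two) but correspondingly nonconstructive, whereas the paper exhibits $N$ more explicitly. Your ``delicate point'' is indeed fine: in the estimate $m\,a_N(x)\le \sum_{k=0}^{N-1}a_m(f^kx)+C_m$, the $m$ initial terms $a_j(x)$, $j<m$, the $m$ remainder terms $a_{r_j}$ with $r_j<m$, and the at most $m$ omitted indices in $\{0,\dots,N-1\}$ are each bounded by $\max_{r\le m}\sup_X|a_r|$, giving a $C_m$ independent of $N$ and $x$ (with small $N<2m$ absorbed into $C_m$ directly).
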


\begin{proof}
We denote by $\m$ the set of  $f$-invariant Borel probability measures on $X$.
First we note that, by the Ergodic Decomposition, if $\chi (\mu)<0$  
for every {\em ergodic} $\mu\in \m$, then 
the same holds for every $\mu\in \m$.

First we show that there exists $K$ such that $a_K(\mu)<c<0$ for 
all $\mu\in \m$. 
Since $\chi(\mu)<0$ there exists $n_\mu$ and $c_\mu$ such that 
$a_{n_\mu}(\mu)<2c_\mu<0$. Since $a_n$ are continuous, for every
$\mu\in \m$ there is a neighborhood $V_\mu$  in the weak$^*$
topology such that $a_{n_\mu}(\nu)<c_\mu$ for every $\nu\in V_\mu$.
We choose a finite cover $\{V_{\mu_i}, \;i\in I\}$ of $\m$ 
and set $R=\max_I n_i$ and $c=\max_I c_i$. 
Take any $\mu\in \m$ and let $i$ be such that $\mu\in V_{\mu_i}$.
For any $K$ we can write $K=k n_i+r$, where $r < n_i \le R$, 
and by the subadditivity we get 
$$
  a_K(\mu)\le ka_{n_i}(\mu)+a_r (\mu) < kc+a_r (\mu).
$$
Since $a_r$ are uniformly bounded for $r<R$, we conclude that 
$a_K(\mu)<c<0$ provided that $K$, and hence $k$, are sufficiently large.

\begin{lemma}\label{psi} 
Suppose that for a continuous function $\phi:X\to \R$, $\phi(\mu) < c$ for 
all $\mu\in \m$.
Then there exists $n_0$ such that $\frac 1n \sum_{i=0}^{n-1} \phi(f^ix)<c$ for all $x \in X$ and $n \ge n_0$.
\end{lemma}

\begin{proof}
Suppose on the contrary that there exit sequences  $x_j \in X$ and 
$n_j \to \infty$ such that $S_j =\frac 1{n_j} \sum_{i=0}^{n_j-1} \phi(f^ix) \ge c$.
Note that $S_j=\psi (\mu_j)$, where 
$\mu_j = \frac 1{n_j} \sum_{i=0}^{n_j-1} \delta (f^i x_j)$ is a probability 
measure. Using compactness of the set of probability measures on $X$
we may assume, by passing to a subsequence if necessary, that 
$\mu_j$ weak$^*$ converges to a probability measure $\mu$. 
Since the total variation norm $\|f_* \mu_j -\mu_j \| \le \frac 2{n_j}$
it follows that the limit $\mu$ is $f$-invariant. On the other hand
$\psi (\mu) = \lim \psi (\mu_j) \ge c$, which contradicts the assumption.
\end{proof}

Applying the lemma to a function $a_K$ satisfying 
$a_K(\mu)<c<0$ we conclude that there is 
$n_0$ such that
$\sum_{i=0}^{n-1} a_K(f^ix)<cn\,$  for all $n\ge n_0$ and $x\in X$. 
Let $n=Km\ge n_0.$
Using subadditivity repeatedly, for $i=0, \dots , K-1$  we obtain
$$
\begin{aligned}
a_n(f^ix) &\le a_K(f^i x)+a_{n-K}(f^{K+i}x) \le \dots \\
& \le a_K(f^ix)+a_{K}(f^{K+i}x)+\dots +a_{K}\left( f^{(m-1)K+i}x \right).
\end{aligned}
$$
Adding these $K$ inequalities, we get
$$
a_n(x)+a_n(fx)+\dots +a_n(f^{K-1}x) \,\le\,
\sum_{i=0}^{n-1} a_K(f^ix) \le cn.
$$
Let $N=n+K$. For  $i=0, \dots , K-1$, we obtain
$$
a_N(x)\le a_i(x)+a_{n+K-i}(f^ix) \le  a_i(x)+a_n(f^i x)+a_{K-i}(f^{n+i} x)=:
a_n(f^i x)+\Delta_i(x),
$$
where we set  $a_0(x)=0$.
Let $M=\max\{\, |\Delta_i (x) |:\; 0\le i \le K-1, \; x\in \M\} $.
Adding the inequalities,
we get
$$
K\cdot a_N(x) \le a_n(x)+a_n(fx)+\dots +a_n(f^{K-1}x) + KM \le cn +KM
$$
and hence $a_N(x)\le cn/K+M$. Since $c<0$, taking $m=n/K$ 
sufficiently large, we can ensure  that $a_N(x)<0$ for all $x$.
\end{proof}


\subsection{Proof of Corollary \ref{reductioncor}}
The fiber bunching assumption is used in the proofs of 
Theorems \ref{structure}, \ref{distribution}, and \ref{reduction}
only to obtain the stable and unstable holonomies for $F$.
Thus it suffices to show that the assumption in the corollary also
ensures their existence. 
Applying Proposition \ref{K(x,n)} below with $\xi=0$ we obtain that
for every $\e>0$ there exists $C_\e$ such that
\begin{equation} \label{K(x,n) 0}
   \|F^n_x\| \cdot \| (F^n_x)^{-1} \|  \le C_{\e} e^{\e |n|} 
\quad\text{for all }x\in \M \text{ and  }n\in \Z.
\end{equation}
We consider $\e$ such that $e^\e <\min \nu(x)^{-\beta}$. 
Then \eqref{K(x,n) 0} implies that for all sufficiently large $n$ the 
cocycle $F^n$ over $f^n$ is fiber bunched and, by Proposition \ref{close to Id}, 
has the stable and unstable holonomies. The holonomies for both
$F^N$ and $F^{N+1}$ are also holonomies for $F^{N(N+1)}$ and hence 
coincide by uniqueness, Proposition \ref{close to Id} (c). This easily
implies that they are also holonomies for $F$. We only need to check
Definition~\ref{holonomies}(iii) for $n=1$ which follows from 
that for $n=N+1$ at $x$ and for $n=N$ at $fx$:
$$
\begin{aligned}
H^s_{xy} &= (F^{N+1}_y)^{-1}\circ H^s_{f^{N+1}x \,f^{N+1}y} \circ F^{N+1}_x =\\
&= (F_y)^{-1} \circ (F^{N}_{fy})^{-1} \circ H^s_{f^{N+1}x \,f^{N+1}y} \circ F^{N}_{fx} \circ F_x = (F_y)^{-1} \circ \, H^s_{fx \,fy} \circ F_x.
\end{aligned}
$$

\begin{proposition}\label{K(x,n)} 
Suppose that there exists $\xi \geq 0$ such that
$\lambda_+(F,\mu)-\lambda_-(F,\mu)\le \xi$
for every  ergodic $f$-invariant measure $\mu$. 
Then for any $\e>0$ there exists $C_{\e}$ such that 
\begin{equation} \label{K(x,n) gamma}
   K_F(x,n) = \,\|F^n_x\| \cdot \| (F^n_x)^{-1} \| 
   \le C_{\e} e^{(\xi+\e) |n|} 
\quad\text{for all }x\in \M \text{ and  }n\in \Z.
\end{equation}
\end{proposition}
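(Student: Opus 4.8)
The plan is to deduce \eqref{K(x,n) gamma} from Proposition \ref{<0} applied to a suitable subadditive sequence of continuous functions built from the cocycle. Set, for $n\in\N$,
$$
  a_n(x) = \log\big(\|F^n_x\|\cdot\|(F^n_x)^{-1}\|\big) - (\xi+\e)\,n.
$$
Each $a_n$ is continuous on $\M$ since $F$ is continuous and $F_x$ is invertible for every $x$. The subadditivity \eqref{subad} follows from the cocycle identity $F^{n+k}_x = F^n_{f^kx}\circ F^k_x$ together with submultiplicativity of the operator norm: $\|F^{n+k}_x\|\le\|F^n_{f^kx}\|\cdot\|F^k_x\|$ and likewise for the inverse, and the linear term $-(\xi+\e)n$ is additive. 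Thus $\{a_n\}$ is a subadditive sequence of continuous functions in the sense of \eqref{subad}.

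Next I would check the hypothesis of Proposition \ref{<0}. For an ergodic $f$-invariant measure $\mu$, the Subadditive Ergodic Theorem and the identification of exponents \eqref{exponents} give
$$
  \chi(\mu) = \lim_{n\to\infty}\frac{a_n(\mu)}{n}
  = \lim_{n\to\infty}\frac1n\log\|F^n_x\| + \lim_{n\to\infty}\frac1n\log\|(F^n_x)^{-1}\| - (\xi+\e)
  = \lambda_+(F,\mu) - \lambda_-(F,\mu) - (\xi+\e).
$$
By hypothesis $\lambda_+(F,\mu)-\lambda_-(F,\mu)\le\xi$, so $\chi(\mu)\le -\e<0$ for every ergodic $\mu$. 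Proposition \ref{<0} then yields $N=N(\e)$ with $a_N(x)<0$ for all $x\in\M$, i.e. $\|F^N_x\|\cdot\|(F^N_x)^{-1}\|< e^{(\xi+\e)N}$ for all $x$, which is \eqref{K(x,n) gamma} for the single value $n=N$. To pass to all $n\ge 0$, write $n = qN + r$ with $0\le r<N$ and use submultiplicativity along the orbit together with the bound at $N$ applied at the points $x, f^Nx, \dots, f^{(q-1)N}x$; the finitely many factors $\|F^r_{f^{qN}x}\|\cdot\|(F^r_{f^{qN}x})^{-1}\|$ with $0\le r<N$ are uniformly bounded by compactness of $\M$ and continuity, giving a constant $C_\e$ with $K_F(x,n)\le C_\e e^{(\xi+\e)n}$ for all $n\ge 0$. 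For negative $n$, note $K_F(x,n) = K_F(f^nx, -n)$, so the bound for positive times applied at $f^nx$ gives the claim for $n<0$ as well, after enlarging $C_\e$ if necessary.

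The only genuinely delicate point is verifying that the limits defining $\chi(\mu)$ match the Lyapunov exponents as claimed: this requires knowing that $\lim\frac1n\log\|(F^n_x)^{-1}\|$ exists a.e. and equals $-\lambda_-(F,\mu)$, which is exactly the content of the Oseledets/Kingman statement recalled in \eqref{exponents} applied to the inverse cocycle, so there is no real obstacle — everything else is a routine packaging of Proposition \ref{<0}. One should also double-check the trivial but necessary fact that $a_n(\mu)=\int_\M a_n\,d\mu$ is finite, which holds because $a_n$ is continuous on the compact space $\M$, so $\chi(\mu)$ is well-defined for every invariant $\mu$ and the Ergodic Decomposition step inside Proposition \ref{<0} applies verbatim. $\QED$
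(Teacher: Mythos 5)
Your proof is correct and follows essentially the same route as the paper: apply Proposition \ref{<0} to $a_n(x)=\log K_F(x,n)-(\xi+\e)n$, verify subadditivity from the cocycle identity and submultiplicativity of $K_F$, compute $\chi(\mu)=\lambda_+(F,\mu)-\lambda_-(F,\mu)-(\xi+\e)\le-\e<0$ via \eqref{exponents}, and then extend the resulting bound at time $N$ to all $n$ by the division algorithm, compactness, and the identity $K_F(x,n)=K_F(f^nx,-n)$. No gaps.
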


\begin{proof}
The proof is similar to that of \cite[Proposition 2.1]{KS10}
but simpler due to the use of Proposition \ref{<0}.
For a given $\e >0$ we apply Proposition \ref{<0} to 
the functions
 $$
 a_n(x)=\log K_F(x,n) - (\xi+\e) n, \quad n\in \N.
 $$
It is easy to see from the definition of the quasiconformal 
distortion that  
\begin{equation} \label{K(x,n) mult}
K(x, n+k) \le K(x,k) \cdot K(f^kx, n) 
\quad\text{for every } x \in \M \text{ and } n, k \geq 0.
\end{equation}
 It follows that  the sequence of 
functions  $\{a_n\}$ is subadditive. 
\vskip.1cm

Let $\mu$ be an ergodic $f$-invariant measure.
As in \eqref{K,lambda} we obtain   that 
$$
\lim _{n\to \infty} n^{-1} \log {K(x,n)} =
\lambda_+(F,\mu )-\lambda_-(F,\mu )\le \xi \quad \text{for $\mu$-a.e. }x.
$$
It follows that  
$
\lim _{n\to \infty} n^{-1} {a_n(x)} \le -\e<0$
for $\mu$-a.e. $x$, and \eqref{chi} implies that 
$\chi(\mu)<0$ for the sequence $\{a_n\}$.
Hence  there exists $N_\e$ such that 
$a_{N_\e}(x)<0,$ i.e. $K(x,N_\e) \le e^{(\xi+\e) N_\e}$
for all $x \in \M$. 
The proposition follows for $n>0$ from \eqref{K(x,n) mult} by 
choosing $C_\e$ sufficiently large,
and for $n<0$ since $K(x,n)= K(f^nx,-n)$.
\end{proof}


\subsection{Proof of Corollary \ref{locally accessible}}
Let $\tau$ be the continuous $F$-invariant conformal structure
obtained in Theorem \ref{structure}. It follows as in the proof of
Proposition \ref{tauHinvariant} that
$\tau(y)=H^s_{xy} (\tau (x))$ for all $x\in \M$ and $y\in W^s_{loc}(x)$.
By  Proposition \ref{close to Id}, 
$\|H^s_{xy}-\I_{xy}\|\le C\,\dist(x,y)^\beta$, and hence by 
Lemma \ref{A*}
$\;\dist (\I_{xy}(\tau (x)), \tau(y)) \le C'\,\dist (x,y)^\beta$.
The same holds for all $y\in W^u_{loc}(x)$.
Now local $\alpha$-H\"older accessibility implies  that $\tau$ 
is $\alpha \beta$-H\"older  on $\M$. 

In the proof of Theorem \ref{distribution} we established that
the sub-bundle is invariant under the stable and unstable
holonomies. Thus by the same reason it is $\beta$-H\"older 
along $W^u$ and $W^s$, and $\alpha \beta$-H\"older on $\M$.
Since the lift $\tilde f$ in Theorem \ref{reduction} is also locally 
$\alpha$-H\"older accessible, we obtain the same regularity
for the sub-bundles $\tilde \E^i$ and the factor bundles 
$\tilde \E^i/\tilde \E^{i-1}$. In particular, the holonomies for the induced
cocycle on $\tilde \E^i/\tilde \E^{i-1}$ are $\beta$-H\"older along 
$\tilde W^u$ and $\tilde W^s$. As above, the conformal 
structure on $\tilde \E^i/\tilde \E^{i-1}$ is invariant under these 
holonomies and hence it is $\alpha \beta$-H\"older on $\tilde \M$.
$\QED$


\subsection{Proof of Corollary \ref{anosov}}
The difference in the proof for the partially hyperbolic and Anosov 
case is in obtaining global continuity on $\M$ from that along the stable 
and unstable foliations, and this argument is more direct in the Anosov case.

Theorem \ref{structure} for Anosov case follows from Proposition \ref{close to Id} (a) and
\cite[Proposition 2.3]{KS10}. Alternatively, as in Theorem \ref{structure}, 
we obtain that the conformal structure $\tau$ is essentially $\beta$-H\"older 
along $W^s$ and $W^u$ and then conclude
that it is essentially $\beta$-H\"older on $\M$ using $1$-H\"older 
accessibility and the local product structure of the measure,
as in the end of the proof of \cite[Proposition 2.3]{KS10}.
In the proof of Theorem \ref{distribution} we only need to replace 
the reference \cite[Theorem C]{ASV} by \cite[Theorem D]{AV}
and then conclude that the invariant distribution is
$\beta$-H\"older by Corollary \ref{locally accessible}.
Using these results, Corollary \ref{qc implies conf} and 
Theorem \ref{reduction} can be obtained as in this paper.

The prove the last statement of the corollary we recall that
for H\"older continuous linear cocycles over hyperbolic systems,
the Lyapunov exponents of any ergodic invariant measure can 
be approximated by Lyapunov exponents of periodic measures
\cite[Theorem 1.4]{K11}. Hence we obtain 
$\lambda_+(F,\eta)=\lambda_-(F,\eta)$ for every ergodic $f$-invariant 
measure $\eta$ and the argument in the proof of 
Corollary \ref{reductioncor} applies.   


\subsection{Proof of Theorem \ref{reductionH}}

By Corollary \ref{anosov} the conclusion of Theorem \ref{reduction} holds, 
moreover, the invariant sub-bundles and conformal structures are 
$\beta$-H\"older.

We use notations from the proof of Theorem \ref{reduction}.
Recall that $\tilde F : \tilde  \E \to \tilde  \E$ is the  lift of $F$ to the 
cocycle over $\tilde f :\tilde \M \to \tilde \M $, 
and that the cocycle $\hat F=\tilde F^N$ preserves the flags 
$\tilde \W^1, ..., \tilde \W^l$. We denote these $\beta$-H\"older flags by
$$\tilde \W^j= \left\{ \, \{0\}=\tilde \E^{j,0}\subset \tilde \E^{j,1} \subset \dots
 \subset \tilde \E^{j,k}=\tilde \E \right\}, \qquad j=1, ..., l.$$
 Now we define sub-bundles 
$\hat \E^{i} = \sum_{j=1}^l \tilde \E^{j,i}$, where the sum may be 
 not direct, which form a new $\beta$-H\"older flag, where the inclusions 
may be not strict. By the construction of $\tilde \W^j$, this flag projects to 
a $\beta$-H\"older $F$-invariant flag 
$\{ 0 \} =\E^0 \subset \E^1\subseteq \E^2 \subseteq \dots \subseteq \E^k= \E$.
Eliminating unnecessary sub-bundles in case of
equalities we obtain the desired flag \eqref{flagH}.
We will show below that for each $i=1, ..., k$ the factor bundle 
$\hat \E^i /\hat \E^{i-1}$ has a continuous conformal structure 
invariant under the factor of $\hat F$. This implies uniform 
quasiconformality of the factor cocycle $F^{(i)}$ induced by $F$ 
on $\E^i / \E^{i-1}$ and, by Corollaries \ref{qc implies conf} and \ref{anosov}, existence of a $\beta$-H\"older conformal structure $\tau_i$ on $\E^i / \E^{i-1}$ invariant under $F^{(i)}$, $i=1, ..., k$. Then the proof is completed as follows.

We normalize the conformal structure $\tau_i$ to get a $\beta$-H\"older 
Riemannian metric 
$\|.\|_i'$ on $\E^i / \E^{i-1}$, $i=1, ..., k$, with respect to which $F^{(i)}$ 
is conformal. We denote by $a_i(x)$ the scaling coefficients with respect to these norms:
$$ 
\| F^{(i)}(v) \|_i' = a_i(x) \| v\|_i' \quad \text{ for all } v\in \E^{i}/\E^{i-1}\, (x).
$$
These are positive $\beta$-H\"older functions, and for each periodic point 
$p=f^np$ the product $a_i(f^{n-1}p) \cdots a_i(fp) \, a_i(p)$ is the same 
for all $i$. Indeed, the matrix of $F^n_p$ is block triangular in any basis of 
$\E_p$ appropriate for the flag, and the above product is simply the modulus of 
the eigenvalues of the block corresponding to $\E^i / \E^{i-1}$; all these
moduli are equal since $F^n_p$ has only one Lyapunov exponent. 
Now by the Liv\v{s}ic theorem \cite{Liv2},\cite[Theorem 19.2.1]{KH} this implies that the functions $a_i$ are cohomologous, more precisely
there exist positive $\beta$-H\"older functions $\psi_i$ such that for all $x$
$$
a_i(x)= a_1 (x) \psi_i (fx) \psi_i (x)^{-1}, \quad i=2, ..., k.
$$ 
Choosing new metric $\| .\|_i = \psi_i ^{-1} \| .\|_i'$, $i=2, ..., k$, 
makes $a_1$ the scaling coefficient for all $F^{(i)}$.  
Hence the cocycle $a_1(x)^{-1} F(x)$  induces 
isometries on each $\E^{i}/\E^{i-1}$, $i=1, ..., k$.

It remains to obtain conformal structures on $\hat \E^i / \hat \E^{i-1}$ 
invariant under the factors of $\hat F$. We fix $1 \le i \le k$. As we 
showed in the proof of Theorem \ref{reduction}, for each 
$j=1, ..., l$ the factor of $\hat F$ on $\tilde \E^{j,i}/\tilde \E^{j,i-1}$ has 
an invariant conformal structure, which in our case is 
$\beta$-H\"older. We normalize these structures to obtain $\beta$-H\"older 
Riemannian metrics $g_j$. As in the argument above we can show that 
the scaling coefficients of $\hat F$ are 
cohomologous functions. Hence we may
assume that the metrics $g_j$ are normalized so that for some positive
function $\varphi$ the cocycle $\bar F =\varphi \hat F$ induces an 
isometry on $\tilde \E^{j,i}/\tilde \E^{j,i-1}$ for each $j=1, ..., l$. 
To simplify notations we will also write $\bar F$ for its induced map
on any factor bundle.

We fix $j$ and consider 
$\bar \E^j =\tilde \E^{j,i}/(\tilde \E^{j,i} \cap \hat \E^{i-1})$ as a factor bundle 
of $\tilde \E^{j,i}/\tilde \E^{j,i-1}$. Since $\bar F$ is isometric on 
$\tilde \E^{j,i}/\tilde \E^{j,i-1}$, it preserves the orthogonal complement 
of $(\tilde \E^{j,i} \cap \hat \E^{i-1})/\tilde \E^{j,i-1}$ and the metric $g_i$ 
restricted to it. This orthogonal complement is isomorphic  
to $\bar \E^j$, and thus we obtain an $\bar F$-invariant metric $\bar g_j$ 
on $\bar \E^j$. Now we view $\bar \E^j$ as a sub-bundle 
of $\hat \E^{i}/\hat \E^{i-1}$, so that $\hat \E^{i}/\hat \E^{i-1} =\sum_{j=1}^l \bar \E^j$, and combine the metrics $\bar g_j$ as follows.
Let $\U=\bar \E^1 \cap \bar \E^2$ and $\U^\perp$ be its orthogonal
complement in $\bar \E^2$. As before, $\bar F$ preserves $\U^\perp$ 
and the restriction of $\bar g_2$ to it. 
We combine $g_1$ and $g_2$ into $\bar F$-invariant the Riemannian 
metric on $\bar \E^1 + \bar \E^2=\bar \E^1 \oplus \U^\perp$ by declaring 
the last two bundles orthogonal. Continuing this inductively we obtain
a $\beta$-H\"older Riemannian metric on $\hat \E^{i}/\hat \E^{i-1}$
with respect to which $\bar F$ is isometric and $\hat F$ conformal.


\subsection{Proof of Theorem \ref{polynomial}}

By Theorem \ref{reductionH} the cocycle $G(x)=\phi (x) F(x)$ induces 
isometries  on each factor bundle $\E^{i}/\E^{i-1}$.  
Inductive application of the next proposition shows that 
$\| G^n(x) \| \le Dn^{k-1}$. Applying it to $G^{-1}$  yields
$\| G^{-n}(x) \| \le Dn^{k-1}$, and hence the quasiconformal distortion 
satisfies $K_{G}(x,n)\le C n^{2(k-1)}$.
Since $K_F(x,n)=K_G(x,n)$, the first part of the theorem follows.

If $\lambda_+(F,\mu_p )=\lambda_-(F,\mu_p )=0$, then $\phi (x)$ is cohomologous to the constant 1 and, by rescaling the norm, we obtain that 
$F$ itself induces isometries on each factor bundle 
$\E^{i}/\E^{i-1}$. Hence the second part  also follows from the next proposition.

\begin{proposition}\label{ind} 
Let $F : \E \to \E$ be a continuous linear cocycle over $f$ and let
$V$ be an $F$-invariant continuous sub-bundle. Suppose that
the factor cocycle $\bar F : \E/V \to \E/V$ is an isometry 
and that for some $C$ and $k$ the  
restriction $F_V=F|_V$ satisfies $\|F_V^n(x)\| \le Cn^j$ for all 
$x$ and $n\in \N$. Then there exists a constant $D$ such that
$\| F^n(x) \| \le Dn^{j+1}$ for all $x$ and $n\in \N$.
\end{proposition}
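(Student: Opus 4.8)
The plan is to trivialize the upper-triangular structure of $F$ relative to $V$ and estimate the off-diagonal term by hand. Fix a continuous Riemannian metric on $\E$ and let $V^\perp$ be the orthogonal complement of $V$, so that $\E=V\oplus V^\perp$ is a continuous splitting and the orthogonal projection identifies $V^\perp$ isometrically with $\E/V$ carrying the quotient metric. Since any two continuous metrics on a bundle over the compact base $\M$ are uniformly equivalent, the hypothesis that $\bar F$ is an isometry of $\E/V$ implies that the map $D_x:V^\perp_x\to V^\perp_{fx}$ representing $\bar F$ satisfies $\|D^n_x\|\le c$ for some constant $c$ and all $x\in\M$, $n\in\N$. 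With respect to the splitting,
$$ F_x=\left(\begin{smallmatrix} A_x & B_x \\ 0 & D_x\end{smallmatrix}\right),\qquad A_x=F|_{V_x},\quad B_x:V^\perp_x\to V_{fx}, $$
and $B$ is continuous, so $\|B_x\|\le b$ for some $b$ and all $x$.

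Next I would compute the iterates. A straightforward induction (using the cocycle relation) gives $F^n_x=\left(\begin{smallmatrix} A^n_x & S_n(x) \\ 0 & D^n_x\end{smallmatrix}\right)$, where $A^n_x$ is the iterate of $F_V=F|_V$, so $\|A^n_x\|=\|F^n_V(x)\|\le Cn^j$ by hypothesis, and where the off-diagonal block satisfies $S_{n+1}(x)=A_{f^nx}\,S_n(x)+B_{f^nx}\,D^n_x$ with $S_1(x)=B_x$; solving the recursion,
$$ S_n(x)=\sum_{k=0}^{n-1} A^{\,n-1-k}_{f^{k+1}x}\,B_{f^kx}\,D^k_x\qquad(\text{with }A^0=\Id). $$
Separating the term $k=n-1$ and using the bounds above,
$$ \|S_n(x)\|\le bc+bc\sum_{k=0}^{n-2}\|A^{\,n-1-k}_{f^{k+1}x}\|\le bc+bcC\sum_{m=1}^{n-1}m^j\le D_1\,n^{j+1} $$
for a suitable $D_1$ and all $n\in\N$, since $\sum_{m=1}^{n-1}m^j\le n^{j+1}$. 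Finally, the operator norm of a block upper-triangular map is bounded, up to a universal constant, by the sum of the norms of its blocks, so
$$ \|F^n_x\|\le \|A^n_x\|+\|S_n(x)\|+\|D^n_x\|\le Cn^j+D_1 n^{j+1}+c\le D\,n^{j+1} $$
for an appropriate $D$ and all $n$, which is the assertion.

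I do not expect a genuine obstacle here; the argument is elementary bookkeeping. The only two points needing a little care are the reduction of ``$\bar F$ is an isometry for some metric'' to a uniform bound on $\|D^n_x\|$, which uses only compactness of $\M$, and the verification of the recursion for $S_n$ together with its closed form; once these are recorded, the polynomial estimate falls out of $\sum_{m=1}^{n-1}m^j\le n^{j+1}$ as above. One could equally run the whole computation with norms measured in any fixed continuous metrics on $V$, $\E/V$, and $\E$, since these are all uniformly equivalent over the compact base, so no generality is lost in choosing the orthogonal splitting.
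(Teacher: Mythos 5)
Your proof is correct and follows essentially the same route as the paper's: decompose $\E\cong V\oplus \E/V$ (via $V^\perp$), iterate the block-triangular form to get a closed expression for the off-diagonal part, and bound it by $\sum_{m<n}m^j=O(n^{j+1})$. The only cosmetic differences are that the paper phrases the computation in terms of the projections $\pi\circ F^n$ and $P\circ F^n$ rather than explicit matrix blocks, and that you add the (harmless but slightly more careful) remark that ``isometry'' yields only a uniform bound on $\|D^n_x\|$ once one passes to the orthogonal-complement metric, using uniform equivalence of continuous metrics over the compact base.
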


\begin{proof}
We denote by $P:\E \to \E/V$ the natural projection and by $\pi :\E \to V$ 
the orthogonal projection with respect to some Riemannian metric on $\E$. 
Then for any $x$ the map $v \mapsto (P(v),\pi (v))$ identifies $\E(x)$ with 
$\E/V (x) \oplus V(x)$, and $\max \{ \|(P(v)\|,\| \pi (v)\| \}$ gives a convenient 
continuous norm on $\E$.  
Since the linear map 
$$
\Delta '(x) =(\pi \circ F - F_V \circ \pi)(x): \E(x) \to V(fx)
$$
is identically zero on $V(x)$ we can write it as 
$\Delta ' (x)= \Delta (x)  \circ P $, where the linear map 
$\Delta (x): \E/V (x) \to V(fx)$ depends continuously on $x$.
Thus we have
$$
\pi \circ F = F_V \circ \pi  + \Delta \circ P, \quad  
P \circ F^n = \bar F^n \circ P,  \qquad \text{and hence}
$$
$$
\begin{aligned}
\pi \circ F^n & =  (F_V \circ \pi  + \Delta \circ P)  \circ F^{n-1}  =
F_V \circ (\pi \circ F^{n-1})  + \Delta \circ \bar F^{n-1} \circ P = \\
& = F_V \circ ((F_V \circ \pi  + \Delta \circ P) \circ F^{n-2})  + \Delta \circ \bar F^{n-1} \circ P = \\
& = F_V^2 \circ ( \pi \circ F^{n-2})+ F_V\circ \Delta  \circ \bar F^{n-2}  \circ P 
+ \Delta \circ \bar F^{n-1} \circ P = \dots = \\
& = F^n_V  \circ \pi  + 
\sum_{i=0}^{n-1}  F^{n-i-1}_V  \circ \Delta  \circ \bar F^i \circ P.
\end{aligned}
$$
Let $K$ be such that $\| \Delta (x) \| \le K$ for all $x$. 
Since  by the assumptions $\|F_V^i\| \le Cn^i$ and $\|\bar F^i \|=1$,
we can estimate 
$$
\| \pi \circ F^n (x)\| \le Cn^j  + 
\sum_{i=0}^{n-1}  C(n-i-1)^j \cdot K \le Cn^j  + n Cn^j K \le D n^{j+1}
$$
for some constant $D$ independent of $n$ and $x$. 
Since $\| P \circ F^n  \| = \|\bar F^n \circ P \| \le 1$,  we conclude that 
$$ 
\| F^n (x) \| = \max \,\{\| P \circ F^n (x)\| ,\, \| \pi \circ F^n (x)\| \} \le D n^{j+1}.
$$
\end{proof}



\end{document}